\newtheorem{definition}{Definition}[section]
\newtheorem{proposition}{Proposition}[section]
\newtheorem{lemma}{Lemma}[section]
\newtheorem{remark}{Remark}[section]
\newtheorem{corollary}{Corollary}[section]
\numberwithin{equation}{section}
\newcommand\norma[1]{\left\lVert#1\right\rVert}
\newcounter{alphatheo}
\newenvironment{alphatheo}{\refstepcounter{alphatheo}\medskip\noindent{\bf Theorem\ \Alph{alphatheo}.} \it }{\medskip}
\newcounter{example}
\renewenvironment{proof}{\medskip\noindent{\sc Proof:}}{\medskip}
\def\qed{\ifhmode\unskip\nobreak\fi\quad 
  \ifmmode\square\else$\square$\fi}
\newcommand{\R}{\mathbb R}
\newcommand{\N}{\mathbb N}
\newcommand{\Z}{\mathbb Z}
\newcommand{\C}{\mathbb C}
\def\erre{\mathbb R}
\def\ze{\mathbb Z}
\def\ene{\mathbb N}
\def\bsL{{\mathcal L}}
\def\D{{\mathcal D}}
\def\E{{\mathcal E}}
\def\L{{\mathcal L}}
\def\M{{\mathcal M}}
\def\O{{\mathcal O}}
\def\S{{\mathcal S}}
\def\X{{\mathcal X}}
\def\ccinf{C^\infty_{c}}
\def\bmo{\hbox{\rm bmo\,}}
\def\<{\langle}
\def\>{\rangle}
\def\eps{\varepsilon}
\def\D{{\mathcal D}}
\def \P{{\mathcal P}}
\newcommand{\supp}{\text{supp}\,}
\newcommand{\diver}{{\rm div\,}}
\newcommand\inner[2]{\langle #1, #2 \rangle} 
\begin{document}

\title[Higher order div-curl type estimates for elliptic linear operators]{Higher order div-curl type estimates for elliptic homogeneous linear differential operators on localizable Hardy spaces}

\author {C. Machado}
\address{Instituto de Ci\^encias Matem\'aticas e de Computa\c{c}\~ao, Universidade de S\~ao Paulo, S\~ao Carlos, SP, 13566-590, Brasil}
\email{catarina.machado@usp.br}

\author {T. Picon}
\address{Departamento de Computa\c{c}\~ao e Matem\'atica, Universidade de S\~ao Paulo, Ribeir\~ao Preto, SP,  14040-901, Brasil}
\email{picon@ffclrp.usp.br}

\thanks{The authors were supported by Funda\c{c}\~ao de Amparo \`a  Pesquisa do Estado de S\~ao Paulo (FAPESP - grants 18/15484-7, 21/12655-8, 24/12753-8 and 25/00433-1) 
and the second by Conselho Nacional de Desenvolvimento Cient\'ifico e Tecnol\'ogico (CNPq - grants 315478/2021-7 and 302676/2025-2)}
\subjclass[2020]{Primary 35J30 35B45; Secondary 30H10 35A23}

\keywords{div-curl estimates; Hardy-Sobolev spaces; atomic decomposition, Poincar\'e inequality, elliptic operators}

\begin{abstract}
In this work, we establish higher-order div-curl type estimates in the sense of  Coifman, Lions, Meyer \& Semmes, in a local setting for elliptic homogeneous linear differential operators with smooth coefficients acting on localizable Hardy spaces. 
Our results imply and extend previously known estimates for first-order operators associated with elliptic systems and complexes of vector fields. As tools of independent interest, we develop a new smooth atomic decomposition for localizable Hardy-Sobolev spaces and prove a Poincar\'e-type inequality in this framework.

\end{abstract}

\maketitle

\section{Introduction}

In the classical work  due to Coifman, Lions, Meyer \& Semmes in \cite{CLMS}, some nonlinear inequalities were studied, in particular the so called div-curl estimates on Hardy spaces in $\R^{N}$. More precisely, if 
	\begin{equation}\label{parameters}
	\frac{N}{N+1}<p <\infty,\quad 1<q\leq \infty\quad  \text{and}\quad\frac{1}{r}:=\frac{1}{p}+\frac{1}{q}<1+\frac{1}{N}
	\end{equation}
then there exists a constant $C>0$ such that
\begin{equation}\label{eq01}
\|u \cdot v\|_{H^{r}}\leq C \|u\|_{H^{p}}\|v\|_{H^{q}},
\end{equation}
where $u \in H^{p}(\R^{N},\R^{N})$ and $v \in H^{q}(\R^{N},\R^{N})$ are vector fields satisfying ${\rm curl}\;u=0$ and ${\rm div}\;v=0$, respectively. Clearly, if ${\rm curl}\;u=0$ then we may write $u=\nabla \phi$ and since ${\rm div}\,v=0$, we have  
$u\cdot v =\nabla\phi\cdot v={\rm div}\,(\phi\, v)$. Under assumption ${\rm div}\,v=0$,  the estimate \eqref{eq01} may equivalently be written as
\begin{equation}\label{eq03}
\|\nabla\phi \cdot  v\|_{H^{r}}\leq C \|\nabla \phi\|_{H^{p}}\|v\|_{H^{q}}.
\end{equation} 
Note that this restriction can be interpreted as requiring $v$  to belong to the kernel of the formal adjoint of the gradient operator.
We point out that in the particular case $r=1$, the previous inequalities improve upon the information provided by H\"older's inequality. 
We recall that the Hardy space $H^{1}(\R^{N})$ is a strict subspace of  $L^{1}(\R^N)$. 

Estimates of this type have been extended in several directions as nonhomogeneous version for local Hardy spaces and BMO (\cite{CDY}, \cite{CGS}, 
\cite{D}), paraproducts (\cite{BGK}, \cite{BIJZ}, \cite{YYZ}),  Lipschitz domains in $\R^{N}$ (\cite{ART}, \cite{LM1}, \cite{LM2}), weighted Hardy spaces (\cite{BFG}), and applications to the Navier-Stokes equations (\cite{Miyakawa}).

In \cite{HHP}, the authors obtained a local version of inequality \eqref{eq03} in the setting of systems and complexes of vector fields with complex variable coefficients.
Suppose that $\L := \left\{L_{1},\dots,L_{n}\right\}$ is a  system of linearly independent vector fields with smooth complex coefficients defined on an open set $\Omega \subset \R^{N}$ and consider the operators 
$$\nabla_{\L}\,u := (L_{1}u,\dots,L_{n}u), \quad \text{for} \quad u\in C^{\infty}(\Omega)$$ 
and 
$${\rm div}_{\L^{*}}\,v := \sum_{j=1}^{n}L^{*}_{j}v_{j}, \quad \text{for} \quad  v\in C^{\infty}(\Omega,\R^{n}),$$ 
 which are precisely the operators $\nabla$ and $\rm{div}$ when $n=N$ and $L_{j}=\partial_{x_{j}}$ for $j=1,\dots,n$.
 Here  $L_{j}^{*}=\overline{ L_j^{t}}$, where $\bar L_{j}$ denotes the vector field obtained from $L_j$ by conjugating its coefficients and $L_j^t$ is the formal transpose of $L_j$. 
 Since the classical Hardy spaces $H^p(\erre^N)$ are not closed under multiplication by test functions, and the related operators have variable coefficient, it is appropriate to consider the nonhomogeneous version of Hardy spaces denoted by $h^p(\erre^N)$, also called localizable Hardy spaces, introduced by Goldberg in \cite{G}. 
 The \cite[Theorem A]{HHP} asserts that if $\mathcal{L}$ is an elliptic system of complex vector fields  defined on $\Omega\subset\R^N$ with $N \geq 2$ and  $r,p,q$ satisfying \eqref{parameters}
then for every point $x_{0} \in \Omega$ there exist an open neighborhood $x_0\in U \subset \Omega$ and $C(U)>0$
	such that 
	\begin{equation}\label{estimateHHP}
		\|\nabla_{\mathcal{L}}\phi\cdot v\|_{h^r} \le C\|\nabla_{\mathcal{L}}\phi\|_{h^p}\|v\|_{h^{q}},
	\end{equation}
	holds for any $\phi\in C_{c}^{\infty}(U)$ and $v\in C^{\infty}_{c}(U,\C^{n})$ satisfying ${\rm div}_{\mathcal{L^{\ast}}}\;v=0$ in the sense of distributions. 
The ellipticity of the system $\left\{L_{1},\dots,L_{n}\right\}$ means that, for any {\it real} 1-form $\omega$ satisfying  $\left\langle \omega, L_{j}\right\rangle=0$ for all $j=1,\dots,n$ implies $\omega=0$, that is equivalent to saying that the second order operator
\begin{equation*}
\Delta_{{\L}}\; :=\;L_1^* L_1+\cdots+L_n^* L_n
\end{equation*}
is elliptic in the classical sense. Extensions of the inequality \eqref{estimateHHP} for complexes associated to the system $\L$, inspired in the global setting for de Rham complex (see \cite{CLMS}), were also presented in \cite{HHP} (see the Section \ref{S7} below).  

A natural question arises concerning how to obtain analogous inequalities when the gradient is replaced by higher order homogeneous linear differential operators with variable coefficients. Partial results were obtained by Taylor \cite[Chapter 8, Section 8]{Tay} in the setting of pseudodifferential operators  and by Kozono \& Yanagisawa in \cite{KY} on Lebesgue spaces, including first order linear differential operators with smooth variable coefficients. Recall that the spaces $H^{p}(\R^{N}),h^{p}(\R^{N}),L^{p}(\R^{N})$ are equivalents with comparable norms, when $p>1$.  In this paper, 
our main goal is to address this question by establishing
a type of div-curl estimates for high order homogeneous linear differential operators with variable coefficients in Hardy spaces, in the same spirit as \eqref{eq03}. 
To obtain estimates analogous to \eqref{estimateHHP} for operators with variable coefficients, it is necessary to work with local rather than global estimates; for this reason, the use of localizable Hardy spaces is appropriate.

Let $\Omega \subseteq \R^{N}$ be an open set and let $A(\cdot,D)$ be a homogeneous linear differential operator of order $m$ with smooth complex coefficients on $\Omega$ given by
\[
A(x,D)=\sum_{|\alpha|{=} m} 
a_\alpha(x)\partial^\alpha: C^{\infty}(\Omega,E)\mapsto C^{\infty}(\Omega,F),
\]
where $E$ and $F$ are finite-dimensional complex vector spaces with $dim\;F\geq dim\;E$. 
Our main result is the following.

\begin{alphatheo}\label{teo_principal}
	 Let $A(\cdot,D)$ be an elliptic homogeneous linear differential operator with order m on $\Omega \subset \R^N$, and let $r, p,q$ satisfy
	 $$\dfrac{N}{N+m} < p \leq {\frac{N}{m}}, \; \; \; 1 < q \leq \infty \; \; \text{and} \;\; \dfrac{1}{r} \; := \; \dfrac{1}{p} + \dfrac{1}{q} < 1 + \dfrac{m}{N}.$$
	Then for each $x_0 \in \Omega$, there exists an open neighborhood $x_0 \in U \subset \Omega$ and a constant $C>0$ such that
\begin{equation}\label{maineqa}
		\norma{A(\cdot,D)\phi \cdot v}_{h^r} \leq C \norma{A(\cdot,D)\phi}_{h^p} \|v\|_{W^{m-1,q}} 
\end{equation}
	for any $\phi \in C^{\infty}_c(U,E)$ and $v \in C^{\infty}_c(U,F)$ satisfying
	$A^*(\cdot, D)v = 0$.
	
\end{alphatheo}

We say that $A(x,D)$ is elliptic in $\Omega$ if, for each $x_{0} \in \Omega$, its symbol  
$$a(x_{0},\xi):=\displaystyle \sum_{|\alpha|=m} a_{\alpha} (x_{0}) \xi^{\alpha}$$
{is injective}.  
Here $\displaystyle{\|v\|_{W^{k,q}}:=\sum_{|\gamma| \leq k}\norma{D^{\gamma}v}_{L^q}}$ is the Sobolev norm of order $k \in \N$.  We use the notation  $A^{*}:= \overline{ A^{t}}$, where $\overline A$ denotes the operator obtained from $A$ by conjugating its coefficients and $A^t$ denotes its formal transpose, it  means that for all smooth functions $\varphi$ and $\psi$ having compact support in $\Omega$ and taking values in $E$ and $F$ respectively, we have
 $$
 \int_\Omega A(x,D)\varphi \cdot_{F} {\psi}(x)dx = \int_\Omega \varphi(x) \cdot_{E} {A^*(x,D)\psi(x)}dx.
 $$

A nonhomogeneous version of the inequality \eqref{maineqa} is stated in Theorem B, where the condition $A^{\ast}(\cdot,D)v=0$ is avoided. The proof of this result and some applications extending and recovering the inequality for first order operators associated with the system of complex vector fields  in \cite{HHP} will be presented at Section \ref{S7}. Note that, the estimate \eqref{maineqa} reduces to \eqref{estimateHHP} when $A(\cdot,D)=\nabla_{\L}$, in which case $m=1$ and $A^{*}(\cdot,D)=\diver_{\L^{\ast}}$.  

{We point out that the critical point ${N}/{(N+m)}$ in Theorem A approaches to $0$ when the order of the operator $m$ is higher. It is well known that in Hardy spaces some cancellation conditions are required when $p \searrow 0$ (see \cite{D1,D2} for more details) that means the necessity of new treatment in comparison to the case $m=1$.} 
Moreover, allowing a much larger class of higher-order linear differential operators in  \eqref{maineqa} requires a substantial refinement of the proof strategy. The core of the proof of Theorem A relies on a Poincar\'e type inequality for localizable Hardy-Sobolev spaces, namely: for each $x \in \R^{N}$, $0<t<1$ and $f \in C_{c}^{\infty}(\R^N)$ there exist a special polynomial $P_{x,t,f}$ with degree less than $m-1$ and  a positive (universal) constant $C$ such that
\begin{align}\label{5.2a}
\left( \int_{\R^{N}} \bigg[ \sup_{0<t<1}\fint_{B_{x}^{t}} \bigg\{\frac{1}{t^m} |f(y)- P_{x,t,f} (y)| \bigg\}^{\alpha}dy \bigg]^{{p}/{\alpha}}dx\right)^{1/p} \leq {C \|f\|_{h^{m,p}}}, 
\end{align}
where  
$\alpha$ is a parameter depending on $m,N$ and $p$ to be chosen. 
The functional $\|f\|_{h^{m,p}}$ denotes the non-homogeneous Hardy-Sobolev quasi-norm given by $\sum_{|\beta|\leq m} \|D^{\beta}f\|_{h^{p}}$ for $f \in C_{c}^{\infty}(\R^{N})$.  The Lemma \ref{lemanovo} is devoted to state the inequality \eqref{5.2a} and its proof follows from reduction of uniform control for bounded atoms and a special smooth atomic decomposition on localizable Hardy spaces when ${N}/{(N+m)}<p<\min\left\{1,N/m\right\}$. The upper bound $N/m$ appears naturally due to Sobolev exponent $p^*:=Np/(N-mp)$ that is crucial for the local comparison between the quasi-norm associated with the elliptic operator $\|A(\cdot,D)f\|_{h^{p}}$ and $\|D^{m}f\|_{h^{p}}$ for test functions, {where $D^{m}f:=\left(D^{\beta}f\right)_{|\beta|=m}$ is the total derivative operator with order $m$.}  

The organization of the paper is as follows. 
In Section \ref{S2} we recall some basic definitions related to localizable Hardy and Hardy-Sobolev spaces. 
The Section \ref{S3} is devoted to a generalization of the Poincar\'e inequality, following the work due to Miyachi \cite{Mi,Mi2}. In Section \ref{S4} we introduce a new smooth atomic decomposition for Hardy-Sobolev spaces, which is a key tool in the proof of inequality \eqref{5.2a}. Sections \ref{S5} and \ref{S6} contain elliptic estimates in Hardy norms and the proof of Theorem A, respectively.
Finally, Section \ref{S7} is devoted to a nonhomogeneous version of the main result and applications to first-order operators.

\bigskip 
\noindent \textbf{Notations}.
Throughout the paper, we will use the notation $\Omega \subset \R^{N}$ for an open set, and by $B_x^t$ denotes a ball $B(x,t)$ centered at $x$ and radius $t>0$ ($B$ denotes a generic ball). We set $S(\R^{N})$ the Schwartz space and $S'(\R^{N})$ the set of tempered distributions. The notation 
$h^p_c(B)$ and $h^{k,p}_c(B)$ will stand for $h^p(\R^N)\cap\E'(B)$ and $h^{k,p}(\R^N)\cap\E'(B)$ respectively, where $\E'(B)$ denotes the distributions with compact support contained in $B$.
We will also make use of the theory of pseudo-differential operators $OpS^{m}(\Omega)$ (see \cite{H}) with symbols in H\"ormander's classes $S^m_{1,0}(\Omega)$. Another basic notation is the Hardy-Little\-wood maximal operator, defined for functions $f \in L^1_{{\rm loc}}(\R^N)$ by
$$
Mf(x) := \; \sup_{x\in B} \fint_B |f(y)|dy, \quad {\rm a.e.} \ x\in\R^N,
$$
where the supremum is taken over all balls that contain $x$. Here $\displaystyle{\fint_B:=\frac{1}{|B|}\int_{B}}$, where $|B|$ is the Lebesgue measure of $B$.  It is well known that $M:f \mapsto Mf$ is a bounded operator in $L^p(\R^N)$ for $1<p\le \infty$, and for $f\in L^\infty(\R^N)$ we have the trivial estimate
$Mf(x) \le \|f\|_{\infty}$, almost everywhere in $x\in\R^N.$ Let $E$ be a finite-dimensional complex vector space of  dimension $n_E$. We say that $f = (f_1,...,f_{n_E}): \R^N \mapsto E$ belongs to a vector spaces $X(\R^N; E)$, if each coordinate function $f_i$ belongs to $X(\R^N)$. 
We denote by $\P_{k}^E$ the space of polynomials $P: \R^N \mapsto E$ of degree at most $k$. In particular, when  $E=\C$, we write simply $\P_{k}$.

\section{Localizable Hardy-Sobolev spaces}\label{S2}

In this section, we recall basic definitions and properties of localizable Hardy spaces and Hardy-Sobolev spaces that will be used throughout the paper. We begin by recalling the definition of the localizable (or local) Hardy space 
$h^p(\erre^N)$, introduced by Goldberg \cite{G}.  Fix, once for all, a radial nonnegative function $\varphi\in\ccinf(\erre^N)$ supported in the unit ball $B(0,1)$ with $\int \varphi =1$.
For a tempered distribution $u\in \S'(\erre^N)$, we define the {\it{local (or truncated) maximal function}}  by
\[ 
m_{\varphi}u(x):=\sup_{0<t<1}|(u*\varphi_t)(x)|,
\] 
where $\varphi_t(x)=t^{-N}\varphi(x/t)$.

\begin{definition} \label{dA.1} 
Let $0<p<\infty$. The localizable Hardy space $h^p(\erre^N)$
is defined as the set of tempered distributions 
$u\in \S'(\erre^N)$ such that  $m_{\varphi}u\in L^p(\erre^N)$, i.e.
$
\|u\|_{h^p} := \|m_{\varphi}u\|_{L^p}<\infty. 
$ 
In the particular case $p=\infty$, we set $h^\infty(\erre^N):=L^\infty(\erre^N)$. 
\end{definition} 

In a more general way, the spaces $h^p(\R^N)$ can be defined using any $\varphi \in S(\R^N)$ with $\int_{\R^N} \varphi \ne0$ and the spaces are independent of the choice of $\varphi$.
For $0<p\le1$, the space $h^p(\erre^N)$ is a complete metric space with the distance 
$
d(u,v)=\|u-v\|_{h^p}^p 
$
for $u,v\in  h^p(\erre^{N})$. When $p=1$, the functional $\|u\|_{h^1}$ is a norm and $h^1(\erre^N)$ is a normed space densely contained in $L^1(\erre^N)$.
When $p>1$, the space $h^p(\erre^N)$ coincides with $L^p(\erre^N)$ and the norms are comparable. 
Although  $h^p(\erre^N)$ is not locally convex for $0<p<1$ 
and $\| \cdot \|_{h^p}$ is truly a quasi-norm (see \cite{Tri1}), we will still refer to $\| \cdot \|_{h^p}$ as a norm by simplicity. We point out that $\|\cdot\|_{h^p}$ is translation invariant.

\subsection{Atomic decomposition}\label{atoms}  
A bounded atom in  $h^p(\erre^N)$ in the sense of Goldberg (see \cite{G,St}) is a measurable function $a$ supported in some ball $B=B(x_{0},r)$ satisfying the following properties:
\begin{enumerate}
\item[(i)] $\|a\|_{L^{\infty}} \leq |B|^{-1/p}$; 
\item[(ii)] if $r<1$ then $\int  a(x)x^\alpha\,dx=0$, for all $\alpha\in\ene^{n}$ satisfying $|\alpha|\le N_{p}\, :=\, \lfloor N(p^{-1}-1) \rfloor $.
\end{enumerate}
These types of bounded atoms are usually called standard $h^{p}-$atoms or simply $h^{p}-$atoms.  
In contrast to atoms in Hardy spaces $H^{p}(\R^N)$, the moment conditions are only required for atoms supported on balls with radius $r<1$. Atoms supported on balls with $r \geq 1$ will be called rough atoms and atoms with radius $r<1$ will be called standard atoms (see \cite{AP} for the terminology). For a deep discussion on moment conditions of atoms in $h^{p}(\R^{N})$ we refer to \cite{D1, D2}. The atomic decomposition theorem for localizable Hardy spaces states that any $f\in h^p(\R^{N})$ can be written as an infinite linear combination of $h^{p}-$atoms, more precisely, there exist a sequence of scalars $\left\{\lambda_j\right\}_{j} \in \ell^{p}(\C)$  and  a sequence of  $h^{p}-$atoms $\left\{a_j\right\}_{j}$ such that the series $\sum_j\lambda_j a_j$
converges to $f$ in  $h^p(\R^{N})$, and consequently in $\S'(\R^{N})$. Furthermore,
$\|f\|_{h^p}^p$ is equivalent to $\inf \sum_j|\lambda_j|^p$,
where the infimum is taken over all atomic representations.  Another
useful fact is that  the atoms may be assumed to be smooth functions. In particular, the inclusions $\ccinf(\erre^N)\subset \S(\erre^N)\subset h^p(\erre^N)$ are dense.
The atomic decomposition of $h^p(\erre^N)$ is quite similar to the
atomic decomposition of $H^p(\erre^N)$ in terms of $H^{p}-$atoms (\cite{St}). The difference is that the notion of an $h^{p}-$atom is less restrictive than that of an 
 $H^{p}-$atom, since an $H^{p}-$atom must satisfy (i) and a stronger form of (ii): its moments are required to vanish regardless of the size of the support.
\begin{remark}
Let $T:\ccinf(\erre^N)\mapsto \D'(\erre^N)$ be a weakly continuous linear operator 
in the sense that $\<\phi_j,\psi\>\to\<\phi,\psi\>$ for all  $ \psi\in\ccinf(\erre^N)$ implies  
$\<T\phi_j,\psi\>\to\<T\phi,\psi\>$ for all $ \psi\in\ccinf(\erre^N)$. 
Given $0<p\le1$, $p\le q\le\infty$, assume that for any smooth $h^{p}-$atom $a$ we have $\|Ta\|_{h^{q}}\le C$, for some uniform constant $C>0$. Then $T$ can be extended to a bounded operator from 
$h^{p}(\erre^N)$ to $h^{q}(\erre^N)$. The proof can be found in \cite{HHP}.
\end{remark}

\begin{definition}
Let $0<r<1$. A continuous function $f$ belongs to the homogeneous H\"older (or Lipschitz) space
$\dot{\Lambda}^{r}(\R^{N})$ if there exists $C>0$ such that, for every $x,h \in \R^{N}$ 
\[
|f(x+h)-f(x)|\leq C|h|^{r}.
\] 
For $r=1$,  $f\in \dot{\Lambda}^1(\R^{N})$ (also called Zygmund space)
if there exists $C>0$ such that for every $x,h\in \R^{N}$
\[
|f(x+h)+f(x-h)-2f(x)|\le C|h| .
\] 
If $r=k+s$ for $k \in \N$ and $0< s \leq 1$, we say that  $f\in \dot{\Lambda}^{r}(\R^{N})$ if all derivatives ${\partial}^\alpha f\in \dot{\Lambda}^{s}(\R^{N})$ for $|\alpha| = k$.
\end{definition}
The homogeneous H\"older space
$\dot{\Lambda}^{r}(\R^{N})$ is a locally convex topological vector space with the seminorm
\[
 [f]_{k+s}\,:=  \sum_{|\alpha| = k}
\sup_{\begin{smallmatrix}x,h \in \R^{N}\\ h \ne 0\end{smallmatrix}} 
\frac{|{\partial}^\alpha f(x+h)-{\partial}^\alpha f(x)|}{|h|^{s}}, \quad \quad  0 < s <1,
\]
or
\[
 [f]_{k+s}\,:=  \sum_{|\alpha| = k}
\sup_{\begin{smallmatrix}x,h \in \R^{N}\\ h \ne 0\end{smallmatrix}} 
\frac{|{\partial}^\alpha f(x+h)+|{\partial}^\alpha f(x-h)-2{\partial}^\alpha f(x)|}{|h|}, \quad \quad s=1,
\]
modulo the subspace of the functions satisfying $[f]_{r}=0$, which consists of polynomials of degree at most 
k.
When $0<p<1$ the dual space of $h^{p}(\R^{N})$ may be identified with the nonhomogeneous H\"older space 
$\Lambda^{r}(\R^{N}):=(\dot{\Lambda}^{r} \cap L^{\infty})(\R^{N})$ for 
{$r = \gamma_p = N\left(\frac{1}{p}-1\right)$},
equipped with the norm $ \|f\|_{r}= [f]_{r}+\|f\|_{L^\infty}$. 
Note that
$N/(N+1)<p<1$ if and only if {$0<\gamma_p<1$}.
The dual of $h^{1}(\R^{N})$ can be identified with the space $\bmo(\R^{N})$, defined as the space of locally integrable functions $f$ satisfy
$$
\|f\|_{bmo}\;:=\; \sup_{|Q|< 1}\frac{1}{|Q|}\int_{Q}|f-f_{Q}|+ \sup_{|Q|\geq 1}\frac{1}{|Q|}\int_{Q}|f|<\infty.
$$
Here $Q$ is a cube in $\R^{N}$ with sides parallel to the axes and 
$$
f_{Q}\; := \; \frac{1}{|Q|}\int_{Q}f(x)dx \;= \fint_{Q}f(x)dx,
$$ 
where $|Q|$ is the Lebesgue measure of Q.

\subsection{Hardy-Sobolev spaces} Let {$k \in \N$}, $0<p<\infty$, and consider on $\ccinf(\erre^N)$ the functional 
\[
\|f\|_{{\dot{h}^{k,p}}}{= \|D^{k}f\|_{h^{p}}}:=  
\sum_{|\alpha|=k}\|{\partial}^{\alpha}f\|_{h^{p}}. 
\]
The completion of $\,\ccinf(\erre^N)$ for the functional $\|f\|_{{\dot{h}^{k,p}}}$ may be identified with a subspace of $\S'(\R^{N})$ denoted by ${\dot{h}^{k,p}}(\R^{N})$ and called the (homogeneous) Hardy-Sobolev space of order k. We claim  if 
$f\in \dot{h}^{k,p}(\R^{N}$) and $k<N$, then ${\partial}^{\alpha} f \in \S'(\erre^N)$ {for $|\alpha|=k$} and 
${\sum_{|\alpha|=k}\|{\partial}^{\alpha}f\|_{h^{p}}<\infty}$. 
 Indeed, suppose $\left\{f_j\right\}_{j}$ is a Cauchy sequence of test functions with respect to the functional ${\|f_j\|_{\dot{h}^{k,p}}=\sum_{|\alpha|=k}\|{\partial}^{\alpha} f_j\|_{h^p}}$ and fix $\psi\in\S(\erre^N)$.
Consider the decomposition 
$$\psi=\sum_{|\alpha|=k} K_\alpha \ast \partial^{\alpha}\psi$$ for $K_\alpha(x)=C_{\alpha,N} x^{\alpha}/|x|^{N}$ (see \cite[Theorem 4.1]{SW1}). 
Thus
\begin{equation}\label{K}
\<f_j,\psi\>=(-1)^{k}\sum_{|\alpha|=k}\<\partial^{\alpha} f_j,K_\alpha*\psi\>
\end{equation}
and {$K_\alpha*\psi$ is a smooth bounded function with bounded derivatives of all orders.
In particular, $K_\alpha*\psi\in \Lambda^r(\erre^N)$ for any $r>0$.} If $0<p<1$ and $\gamma_{p}\, :=\,N(p^{-1}-1)$, then by duality 
\[
|\<f_j-f_k,\psi\>|\le \sum_{|\alpha|=k}|\<{\partial}^{\alpha} (f_j-f_k),K_\alpha*\psi\>|     \le \|f_j-f_k\|_{h^{k,p}} \sum_{|\alpha|=k} \|K_\alpha*\psi\|_{\Lambda^{\gamma_{p}}}
\]
showing that $\left\{f_j\right\}_{j}$ is a Cauchy sequence in $\S'(\erre^N)$. 
In summary, $f\in\S'(\erre^N)$ belongs to ${\dot{h}^{k,p}(\erre^N)}$ if there exists a sequence $f_j\in\ccinf(\erre^N)$ such that
$\left\{f_j\right\}_j$ is Cauchy with respect to the norm 
$g\mapsto\|D^{k} g\|_{h^p}$ and 
$f_j\to f$ in $\S'(\erre^N)$, as we have shown.
It follows that if $f\in {\dot{h}^{k,p}}(\erre^N)$ then ${\partial}^{\alpha} f\in h^p(\erre^N)$ for $|\alpha|=k$ and we set $\|f\|_{{\dot{h}^{k,p}}} := \sum_{|\alpha|=k}\|{\partial}^{\alpha} f\|_{h^p}$.  When $p=1$ the proof is similar by considering $\bmo(\erre^N)$ instead of $\Lambda^r(\erre^N)$. 
{We point out that this topology is Hausdorff, which means it is sufficient to show that if a polynomial $\theta \in \P_{k-1}$ (the class of polynomials with order less or equal than $k-1$) belongs to ${\dot{h}^{k,p}}(\erre^N)$, then $\theta=0$. In fact, let $\left\{f_j\right\}_j \in \ccinf(\erre^N)$ be a Cauchy sequence with norm $g\mapsto\|D^{k} g\|_{h^p}$ that converges to $\theta$ in $\S'(\R^N)$. Thus, $\left\{D^k f_j\right\}_j$ is a Cauchy sequence in $h^p(\R^N)$ and, in particular, $\|D^k f_j\|_{h^p}\to \|D^k \theta\|_{h^p} = 0$. Now consider the alternate sequence $(f_1,0,f_2,0,\dots)$. This is a Cauchy sequence of test functions with respect to the norm $g\mapsto\|D^k g\|_{h^p}$ that converges in $\S'(\R^N)$ to $0$, yet has a subsequence converging to $\theta$. Thus, $\theta=0$.}

Alternatively, we may present another natural definition of Hardy-Sobolev spaces (see \cite{KS} for the case $k=1$). Consider the space of tempered distributions $f\in\S'(\R^N)$ such that $D^{k} f \in h^p(\erre^N)$, equipped with the  semi-norm $\|D^{k} f\|_{h^p}$, and denote it by ${\dot{\textrm{h}}}{}^{k,p}(\erre^N)$. In order to turn it into a Hausdorff space, we must take the quotient 
${\dot{\textrm{h}}}{}^{k,p}(\erre^N)/\{\mathcal{P}_{k-1}\}$. 
Moreover, we have shown that there is exactly one element of ${\dot{h}}^{k,p}(\erre^N)$ in each equivalence class in 
${\dot{\textrm{h}}}{}^{k,p}(\erre^N)/\{\mathcal{P}_{k-1}\}$ so the inclusion 
${\dot{h}}^{k,p}(\erre^N)\subset \dot{\textrm{h}}{}^{k,p}(\R^N)$  induces a natural isometry
${\dot{h}}^{k,p}(\erre^N)\simeq {\dot{\textrm{h}}}{}^{k,p}(\erre^N)/\{\mathcal{P}_{k-1}\}$. 
{We denote by $h^{k,p}(\R^{N})$ the nonhomogeneous  Hardy-Sobolev 
space, defined as the completion of $\,\ccinf(\erre^N)$ with respect to the quasi-norm $
\|f\|_{h^{k,p}}:=  
\sum_{|\alpha|\leq k}\|{\partial}^{\alpha}f\|_{h^{p}}.$}

\section{Poincar\'e inequality}\label{S3}

Let $\Omega$ be an open set in $\R^{N}$ and fix $\phi \in C_c^{\infty}(B(0,1))$ with $\int \phi =1$. For each $0 < a \leq \infty$ and a distribution $f \in \mathcal{D}'(\Omega)$, we define the maximal functions	
$$f^+_{\phi, \Omega}(x) = \displaystyle \sup_{0<t<d(x, \partial \Omega)} |\inner{f}{\phi_t(x-\cdot )}| \quad \text{and} \quad f^{+, a}_{\phi, \Omega}(x) = \displaystyle \sup_{0<t<\min\{a,d(x, \partial \Omega)\}} |\inner{f}{\phi_t(x-\cdot )}|$$
for each $x \in \Omega$. Furthermore, for each $x \in \R^N, t>0$ and $L \in \N$, we define
$$\mathcal{F}_{L,\Omega}(x,t) = \{ \phi \in C_c^{\infty}(B(x,t)\cap \Omega), \norma{\partial^{\alpha} \phi}_{\infty} \leq t^{-N-|\alpha|}, |\alpha| \leq L\},$$
and the function
\begin{equation}
f^{*}_{L,\Omega,a}(x) = \displaystyle \sup_{ \displaystyle {\phi \in \bigcup_{0<t<a}} \mathcal{F}_{L,\Omega}(x,t)} |\inner{f}{\phi}|.
\end{equation}
When $\Omega=\R^{N}$, we simplify the notation $f^{*}_{L,\Omega,a}$ by $f^{*}_{L,a}$. The following definition was presented by Miyachi in \cite{Mi}.

\begin{definition} 
Let $0<p<\infty$ and $f \in \D'(\Omega)$. We say that  $f$ belongs to the Hardy space $H^p(\Omega)$ or to the nonhomogeneous Hardy space $h^p(\Omega)$ if $f^+_{\phi, \Omega} \in L^p(\Omega)$ or if $f^{+, 1}_{\phi, \Omega} \in L^p(\Omega)$, respectively.  Furthermore, {if $0<p \leq 1$,} we say that $f \in H^p_{loc}(\Omega)$ if, for each $x \in \Omega$, there exists an open neighborhood $U_x \subset \Omega$ such that $f|_{U_x} \in H^p(U_x)$. 
\end{definition}
Clearly, for each $x \in \Omega$, we have $f^{+, a}_{\phi, \Omega}(x) \leq C_{\phi, L} f^{*}_{L,\Omega,a}(x)$; by {a non trivial argument 
	\begin{equation}\label{eq_*_+}
		\|f^{*}_{L,\Omega,a}\|_{L^{\gamma}(\Omega)}\leq C_{\phi,L,a}\|f^{+, 1}_{\phi, \Omega} \|_{L^{\gamma}(\Omega)}
	\end{equation}
	for $0<\gamma<\infty$, $L>N/\gamma-N$ and $a>0$.} Roughly speaking,  \cite[Corollary 2]{Mi2} asserts the pointwise control $f^{*}_{L,\Omega,a}(x) \leq C_{\phi, L} M_{\frac{N}{N+L}}\left[f^{+, a}_{\phi, \Omega}\right](x)$ for any $x \in \Omega$ and $a,L>0$, where the operator $M_{q}$ is the fractional maximal operator given by $M_q[g](x):= \displaystyle \sup_{t>0}\left( \dfrac{1}{t^N} \int_{B_{x}^{t} \cap \Omega} |g|^q \right)^{{1}/{q}}$, which is bounded on $L^p(\Omega)$ for all $p>q$. 
In particular, this shows that $H^{p}(\Omega)$ and $h^{p}(\Omega)$ do not depend on the choice of $\phi$. When $\Omega=\R^{N}$, we have $f^{+}_{\phi, \R^N} = M_{\phi} f$ and $f^{+,1}_{\phi, \R^N} = m_{\phi} f$; thus we recover the classical Hardy spaces $H^{p}(\R^{N})$ and $h^{p}(\R^{N})$, respectively.

Given $f \in \D'(\Omega)$ and $x \in \Omega$, if the limit $\displaystyle \lim_{t\searrow0} f\ast{\phi_t(x)}$ exists and does not depend on the choice of $\phi$, we define 
		$$[f](x) = \lim_{t\searrow0}  f\ast{\phi_t(x)}.$$
We point out that if $ f \in L^1_{loc}(\R^N)$ then $[f]$ exists and $[f] = f$ almost everywhere. The following construction is due to Miyachi. 
	\begin{lemma}\label{Lema4_M} \cite[Lemma 4]{Mi}
		Let $m \in \N$. Then for each cube $Q \subset \R^N$ there exists a function $\phi_Q$ on $\R^N\times \R^N$ with the following properties:
		\begin{enumerate}
			\item [(i)] $\phi_Q \in C^{\infty}(\R^N \times \R^N)$, and $\supp \phi_Q(x, \cdot) \subset \mathring{Q}$ for each $x \in \R^N$, where $\mathring{Q}$ denotes the interior of $Q$.
			\item [(ii)] For each $f \in \D'(\mathring{Q})$, the function $P_{Q,f}: \R^N \mapsto \C$ given by $P_{Q,f}(x) = \inner{ f}{\phi_Q(x,\cdot)}$ belongs to $\P_{m-1}$; i.e. it is a polynomial of degree at most $m-1$.
			\item [(iii)] $P(x) = \int P(y) \phi_Q(x,y) dy$, for all $P \in \P_{m-1}$.
			\item [(iv)] Let $a>0$ and $x \in aQ$. Then 
$$|\partial^{\alpha}_x \partial^{\beta}_y \phi_Q(x,y)| \leq C_{m,a,\alpha,\beta} \ell(Q)^{-N-|\alpha|-|\beta|},$$ for all multi-indexes $\alpha, \beta$.
		\end{enumerate}
	\end{lemma}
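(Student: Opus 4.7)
My plan is to construct $\phi_Q$ first on a reference cube and then pass to a general $Q$ by affine scaling.

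\textbf{Step 1 (reference cube).} Let $Q_0$ be the unit cube centered at the origin and fix $\eta\in\ccinf(\mathring{Q_0})$ with $\eta\ge 0$ and $\eta>0$ on a non-empty open subset of $Q_0$. Set $d:=\dim\P_{m-1}$ and pick any basis $\{p_1,\ldots,p_d\}$ of $\P_{m-1}$. Since any polynomial of degree at most $m-1$ that vanishes on a non-empty open set is identically zero, the symmetric bilinear form $\langle P,R\rangle_\eta:=\int P(y)R(y)\eta(y)\,dy$ is positive definite on $\P_{m-1}$, and hence the Gram matrix $G$ with entries $G_{ij}=\int p_ip_j\,\eta$ is invertible. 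I then define
$$\phi_{Q_0}(x,y):=\eta(y)\sum_{i,j=1}^{d}(G^{-1})_{ij}\,p_i(x)\,p_j(y).$$
Properties (i) and (ii) are immediate from this formula: $\phi_{Q_0}$ is $C^\infty$, is supported in $\mathring{Q_0}$ in the $y$-variable through the factor $\eta(y)$, and the pairing $\langle f,\phi_{Q_0}(x,\cdot)\rangle=\sum_{i,j}(G^{-1})_{ij}\,p_i(x)\,\langle f,p_j\eta\rangle$ is manifestly a polynomial in $x$ of degree $\le m-1$. For (iii), writing $P=\sum_k c_k p_k\in\P_{m-1}$ and using symmetry of $G$,
$$\int P(y)\phi_{Q_0}(x,y)\,dy=\sum_{i,k}c_k\Big(\sum_j(G^{-1})_{ij}G_{jk}\Big)p_i(x)=\sum_{i,k}c_k\delta_{ik}p_i(x)=P(x).$$
Property (iv) on $Q_0$ is trivial because $\phi_{Q_0}$ is smooth, polynomial in $x$, and compactly supported in $y$.

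\textbf{Step 2 (general cube).} For a cube $Q$ with center $x_Q$ and side-length $\ell(Q)$, I set
$$\phi_Q(x,y):=\ell(Q)^{-N}\,\phi_{Q_0}\!\left(\tfrac{x-x_Q}{\ell(Q)},\,\tfrac{y-x_Q}{\ell(Q)}\right).$$
Conditions (i) and (ii) transfer directly. For (iii), the substitution $z=(y-x_Q)/\ell(Q)$ gives
$$\int P(y)\phi_Q(x,y)\,dy=\int P\bigl(x_Q+\ell(Q)z\bigr)\,\phi_{Q_0}\!\left(\tfrac{x-x_Q}{\ell(Q)},z\right)dz=P(x),$$
since $z\mapsto P(x_Q+\ell(Q)z)$ is again a polynomial of degree $\le m-1$ and the reproducing property of $\phi_{Q_0}$ applies to it. For (iv), the chain rule produces the scaling factor $\ell(Q)^{-N-|\alpha|-|\beta|}$; moreover, for $x\in aQ$ the rescaled point $(x-x_Q)/\ell(Q)$ stays in the bounded set $aQ_0$, on which the smooth function $\partial_x^{\alpha}\partial_y^{\beta}\phi_{Q_0}$ is bounded by a constant $C_{m,a,\alpha,\beta}$.

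\textbf{Main obstacle.} The only genuinely delicate point is securing the invertibility of the Gram matrix $G$, which is precisely why $\eta$ must be supported strictly inside $Q_0$ yet positive on an open set: this is what makes $\langle\cdot,\cdot\rangle_\eta$ a genuine inner product on $\P_{m-1}$. The remainder of the proof is algebraic bookkeeping combined with the standard affine-scaling argument.
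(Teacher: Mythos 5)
Your construction is essentially the same as the paper's: both build a reproducing kernel for $\P_{m-1}$ with respect to the weighted inner product $\int p\,\bar q\,\eta$ on a reference cube and then transport to a general $Q$ by affine scaling. You phrase it via the Gram matrix of an arbitrary basis, while the paper writes $\phi_{Q_0}(x,y)=\eta(y)\sum_i\pi_i(x)\overline{\pi_i(y)}$ with $\{\pi_i\}$ orthonormal for that inner product; these are the same object (orthonormalizing makes $G=I$). All four properties are verified correctly, and your observation that $x\in aQ$ confines the rescaled $x$-argument to the bounded set $aQ_0$ is exactly the right justification for the derivative bounds in (iv).
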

Let $Q_{0}$ be the unit cube centered at the origin in $\R^N$ and fix $\eta \in C^{\infty}_c(Q_{0})$ such that $\eta \geq 0$ and $\int \eta = 1$. Consider  $\P_{m-1}$ the Hilbert space of the polynomials of degree at most $m-1$ with inner product 
$$\inner{p}{q}_{M} = \int p(x)\bar{q}(x)\eta(x)dx,$$ 
and $\{\pi_i\}_i$ an orthonormal basis of $(\P_{m-1}, \inner{\cdot}{\cdot}_{M}) $. For each cube $Q \subset  \R^N$, we define
		\begin{equation}\label{def_phi_Q}
			\phi_Q (x,y) = \ell(Q)^{-N} \left[ \sum_{i=1}^m \pi_i\left(\dfrac{x-x_Q}{\ell(Q)} \right) \overline{\pi_i\left(\dfrac{y-x_Q}{\ell(Q)} \right)} \; \right] \eta\left(\dfrac{y-x_Q}{\ell(Q)} \right),
		\end{equation}
which satisfies all the desired properties.

An important consequence of this lemma is the following result: 
\begin{proposition}\label{prop_3.2}
Let $0< p \leq 1$. If $f \in \D'(\Omega)$ satisfies $\partial^{\alpha}f \in H^p_{loc}(\Omega)$ for all $|\alpha|=m$, then for each $Q \subset \Omega$ and $L \in \N$, there exist a polynomial $P_{Q,f} \in \P_{m-1}$ and {a positive constant $C=C(m,p,L)$} such that
\begin{equation}\label{eq_M}
			|[f](x) - P_{Q,f}(x)| \leq C \ell(Q)^m \sum_{|\alpha|=m} (\partial^{\alpha}f)_{L,\Omega,a}^{*}(x), \quad a=\sqrt{N}\ell(Q),
\end{equation}
for $x \in Q$ almost everywhere.
\end{proposition}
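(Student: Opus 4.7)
The plan is to first establish the estimate when $f$ is smooth in a neighborhood of $\overline Q$, and then to recover the distributional case by regularization $f_\varepsilon := f \ast \phi_\varepsilon$ and passage to the limit $\varepsilon \to 0^+$. The hypothesis $\partial^\alpha f \in H^p_{\rm loc}(\Omega)$ guarantees that $[f](x)$ is defined almost everywhere, that $P_{Q,f_\varepsilon}(x) \to P_{Q,f}(x)$ pointwise (from the definition of $P_{Q,\cdot}$ as pairing with $\phi_Q(x,\cdot)$), and that $(\partial^\alpha f_\varepsilon)^*_{L,\Omega,a}(x)$ can be compared with $(\partial^\alpha f)^*_{L,\Omega,a}(x)$ uniformly in $\varepsilon$.

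For smooth $f$ and a fixed $x \in \mathring Q$, I set $\psi_x(\cdot):=\phi_Q(x,\cdot)$, apply Taylor's formula of order $m-1$ centered at $x$ with integral remainder, pair with $\psi_x$, and invoke property (iii) of Lemma \ref{Lema4_M} (namely $\int P(y)\psi_x(y)\,dy = P(x)$ for $P\in\P_{m-1}$) to identify the polynomial piece of the expansion exactly with $f(x)$. Swapping the order of integration and performing the change of variables $z = x + s(y-x)$ then yields the representation
\[
P_{Q,f}(x) - f(x) \;=\; \sum_{|\alpha|=m}\langle \partial^\alpha f,\, K_x^\alpha\rangle,\qquad K_x^\alpha(z) \;:=\; \frac{m}{\alpha!}(z-x)^\alpha\!\int_0^1 (1-s)^{m-1} s^{-N-m}\,\psi_x\!\left(x + \tfrac{z-x}{s}\right) ds,
\]
with $K_x^\alpha$ supported in $\mathring Q\subset B(x,a)$ and smooth away from $z=x$.

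The principal technical obstacle is that $K_x^\alpha$ is singular at $z=x$: differentiating under the integral and using property (iv) of Lemma \ref{Lema4_M} to bound $\|\partial^\beta \psi_x\|_\infty \lesssim \ell(Q)^{-N-|\beta|}$ gives $|\partial^\beta K_x^\alpha(z)| \lesssim \ell(Q)^{m-1}|z-x|^{1-N-|\beta|}$, so $K_x^\alpha$ is not itself an admissible test in $\mathcal{F}_{L,\Omega}(x,a)$. To bypass this I would introduce a smooth dyadic partition of unity $\{\eta_j\}_{j\ge 0}$ with $\eta_j$ supported in $\{2^{-j-1}a\le |z-x|\le 2^{-j}a\}$ and $\|\partial^\gamma \eta_j\|_\infty \lesssim (2^{-j}a)^{-|\gamma|}$; Leibniz's rule then produces, for every $|\beta|\le L$,
\[
\|\partial^\beta(\eta_j K_x^\alpha)\|_\infty \;\le\; C_{L}\, \ell(Q)^m\, 2^{-j}\,(2^{-j}a)^{-N-|\beta|},
\]
so that after normalization $C_{L}^{-1}\,\ell(Q)^{-m}\,2^{j}\,\eta_j K_x^\alpha$ belongs to $\mathcal{F}_{L,\Omega}(x,2^{-j}a)$. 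By the defining property of the maximal function and its monotonicity in the third parameter one obtains $|\langle \partial^\alpha f, \eta_j K_x^\alpha\rangle| \le C_{L}\, \ell(Q)^m\, 2^{-j}\, (\partial^\alpha f)^*_{L,\Omega,a}(x)$, and summing the geometric series in $j$ together with the sum over $|\alpha|=m$ yields the desired pointwise bound. The distributional case then follows by the approximation argument indicated at the outset.
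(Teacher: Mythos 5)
The paper gives no proof of Proposition~\ref{prop_3.2}; it cites \cite[p.~86, inequality (5.15)]{Mi}. Your proposal supplies a self-contained argument, and it is substantively correct and, as far as one can tell, follows the same route as Miyachi's cited proof: expand $f$ by Taylor's formula of degree $m-1$ about $x$, use property (iii) of Lemma~\ref{Lema4_M} to match the polynomial part with $f(x)$ after pairing against $\psi_x=\phi_Q(x,\cdot)$, rewrite the integral remainder as a pairing of $\partial^\alpha f$ against kernels $K_x^\alpha$ supported in $\mathring Q$, and then control the mild singularity $|K^\alpha_x(z)|\lesssim \ell(Q)^{m-1}|z-x|^{1-N}$ by a dyadic decomposition whose pieces, after the normalization you compute, land in $\mathcal{F}_{L,\Omega}(x,2^{-j}a)$ with a geometrically decaying weight $2^{-j}$. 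All the exponents check out: $\|\partial^\beta K^\alpha_x(z)\|\lesssim \ell(Q)^{m-1}|z-x|^{1-N-|\beta|}$, the Leibniz step gives $\|\partial^\beta(\eta_j K^\alpha_x)\|_\infty\lesssim\ell(Q)^m 2^{-j}(2^{-j}a)^{-N-|\beta|}$ (recall $a=\sqrt N\,\ell(Q)$), and summing the geometric series yields the claimed bound with $C$ depending only on $m,p,L,N$.

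Two small points that deserve a sentence more than your sketch gives them, though neither is a genuine gap. First, the annuli you describe for $\{\eta_j\}$ are pairwise disjoint, so as written they cannot form a partition of unity; you want the usual overlapping dyadic annuli $\supp\eta_j\subset\{2^{-j-2}a<|z-x|<2^{-j}a\}$ with $\sum_j\eta_j\equiv1$ off $\{x\}$; also the $j=0$ piece needs a remark that its support is compactly inside $B(x,a)$, so one may enlarge $C_L$ by a universal factor to fit it into some $\mathcal{F}_{L,\Omega}(x,t)$ with $t<a$ as the definition of $(\cdot)^{*}_{L,\Omega,a}$ requires. Second, for distributional $f$ (with $\partial^\alpha f$ merely in $H^p_{\rm loc}$ and $p\le1$) the kernel $K^\alpha_x$ is not a test function, so the identity $P_{Q,f}(x)-[f](x)=\sum_\alpha\langle\partial^\alpha f,K^\alpha_x\rangle$ only makes sense as the convergent dyadic sum; when you regularize and pass to the limit, you must verify that $(\partial^\alpha f_\varepsilon)^{*}_{L,\Omega,a}\lesssim(\partial^\alpha f)^{*}_{L,\Omega',a+\varepsilon}$ on a slightly shrunken $\Omega$ and that the tail of the $j$-sum is uniformly small in $\varepsilon$; both follow from the estimate you have already obtained and from $\phi\ast\tilde\phi_\varepsilon$ being, up to constants, an admissible test function at a slightly larger scale. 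With these details filled in, the argument is complete.
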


The proof can be found in \cite[pp.86 inequality (5.15)]{Mi}. We are interested in using this result when $\Omega=\R^N$ and for functions in $(L^{1}_{loc} \cap h^{p})(\R^N)$. In order to do it, we present the following definition in \cite{Mi}.

\begin{definition} 
	Let $\Omega \subset \R^N$ be an open subset. We say that $\Omega$ satisfies the condition $(\star)$ if for some constant $A>1$ and each $x \in \Omega$ with $d(x,\Omega^{c})< A^{-1}$, there exists  $x' \in \Omega^c$ such that 
	$$		d(x,x') < A \; d(x, \Omega^{c})
		\quad \text{and} \quad 
		d(x',\Omega) > A^{-1} \; d(x, \Omega^{c}).
	$$
\end{definition}
In particular, any open ball on $\R^N$ satisfies the condition $(\star)$.  
Moreover, from \cite[Remarks (b) and (c), pp. 77]{Mi}, we have $\displaystyle \left. h^p(\R^N)\right|_{_{\Omega}} = h^p(\Omega)$ if $\Omega$ satisfies the condition $(\star)$ and $h^p(\Omega) = H^p(\Omega)$ if $\Omega$ is bounded. Now, let $f \in h^p(\R^N)$ and for each $x$ in $\R^N$ denote $U_x := B(x,1)$. Then,
	$\displaystyle \left. f \right|_{_{U_x}} \in \left. h^p(\R^N)\right|_{_{U_x}} = h^p(U_x) = H^p(U_x)$
	and by definition $f \in H^p_{loc}(\R^N)$.

\subsection{Generalization of Poincar\'e Inequality when $\Omega=\R^{N}$}
Let $0< p \leq 1$ and $f \in L^1_{loc}(\R^N)$ such that $\partial^{\beta}f \in h^p(\R^{N})$ for all $|\beta|=m$. For each $B(x,t) \subset \R^N$ with $0<t<T$ for some fixed $T>0$, there exists a polynomial $P_{x,t,f} \in \P_{m-1}$ and {a constant {$C_{m,N,T}>0$} such that
\begin{equation}\label{eq_M_adapt}
|f(y) - P_{x,t,f}(y)| \leq C_{m,N,T} \; t^m \sum_{|\beta|=m} (\partial^{\beta}f)_{N_p,a}^{*}(y), \quad a=2\sqrt{N}t,
\end{equation}
for $y \in B(x,t)$ almost everywhere. In fact, for each open ball $B(x,t) \subset \R^N$ and $f \in L^1_{loc}(\R^N)$ we can consider the open cube $Q:={Q}(x,2t)$ centered at $x$ and side $\ell(Q)=2t$, and define $\tilde{f} = f \mathcal{X}_{B(x,t)}$.} Since $\tilde{f} \in \D'(Q)$, Lemma \ref{Lema4_M} ensures that there exits a test function $\phi_Q$ such that $P_{Q, \tilde{f}}(y) = \inner{\tilde{f}}{\phi_Q(y,\cdot)}$ is a polynomial of degree at most $m-1$. In particular, 
	$$P_{Q, \tilde{f}}(y) = \int_Q \tilde{f}(z) \phi_Q(y,z) dz = \int_{B_x^t} f(z) \phi_Q(y,z) dz.$$
	For simplicity, we define $P_{x,t,f} \, := \, P_{Q(x,2t),\tilde{f}}$ and we can apply the Proposition \ref{prop_3.2} {for $\tilde{f} \in \D'(\R^N)$}, noting that $[\tilde{f}]=f$ in $B(x,t)$ almost everywhere. Integrating the inequality \eqref{eq_M_adapt} over $B(x,t)$, we get
	\begin{eqnarray*}
		\left( \fint_{B(x,t)} \left|\frac{1}{t^m}\left[ f(y) -  P_{x,t,f}(y)\right]\right|^{\alpha} dy \right) ^{{1}/{\alpha}} &\leq& \dfrac{C}{|B_x^t|^{\frac{1}{\alpha}}} \norma{ \sum_{|\beta|=m} (\partial^{\beta}\tilde{f})_{N_p, 2\sqrt{N}t}^{*} }_{L^\alpha(B_x^t)}\\
		&\leq& \dfrac{C}{|B_x^t|^{\frac{1}{\gamma}}} \norma{ \sum_{|\beta|=m} (\partial^{\beta}\tilde{f})_{N_p, 2\sqrt{N}t}^{*} }_{L^\gamma(B_x^t)},
	\end{eqnarray*}
satisfying $\gamma = \alpha$ if $\alpha>1$ and  $\gamma>\alpha$ if $\alpha=1$. By inequality \eqref{eq_*_+}, 
	\begin{align*}
		\norma{ \sum_{|\beta|=m} (\partial^{\beta}\tilde{f})_{N_p, 2\sqrt{N}T}^{*} }_{L^\gamma(B_x^t)} 
		\leq C_{\phi, N,p, T}\sum_{|\beta|=m} \norma{(\partial^{\beta} \tilde{f})^{+, 1}_{\phi, \R^N}}_{L^\gamma(\R^N)} 
		&\lesssim \sum_{|\beta|=m} \norma{\partial^{\beta} \tilde{f}}_{L^\gamma(\R^N)} \\
		&\lesssim \norma{D^{m} f}_{L^\gamma(B_x^t)}.
	\end{align*}
	Summarizing, there exists a constant $C=C(N,p, m, T, \alpha)>0$ such that
	\begin{equation}\label{Desigualdade_Poincarè_M_maior}
		\left( \fint_{B_x^t} \left|\frac{1}{t^m} [f(y) -  P_{x,t,f}(y)]\right|^{\alpha} dy \right) ^{{1}/{\alpha}} \leq C \left( \fint_{B_x^t} | D^m f(y)|^{\gamma} dy \right) ^{\frac{1}{\gamma}},
	\end{equation}
where $\gamma=\alpha>1$ or $\gamma>\alpha=1$.


\section{New smooth atomic decomposition on Hardy-Sobolev spaces}\label{S4}

Let us first recall the standard atomic decomposition on $h^p(\erre^N)$ due to Goldberg, which uses the maximal characterization (\cite[Lemma 4]{G}).
Let $\varphi\in \ccinf(\erre^N)$ with $\int \varphi(x)\,dx=1$ and $\int x^\alpha\varphi(x)\,dx=0$ for $0<|\alpha|\le L(p)$ sufficiently large (bigger than $N_{p}:= \lfloor N \left( \frac{1}{p}-1\right) \rfloor$) and write  $f_1 := f-\varphi*f$, $f_2 \, :=\,\varphi*f$ that satisfy 
\begin{equation}\label{a1}
f=f_1+f_2, \,\quad \|f_1\|_{H^p}\le C\|f\|_{h^p},\quad	
\forall \, f\in h^p(\erre^N),
\end{equation}
with $C>0$ independent of $f$. 
Thus, an atomic decomposition of $f$ is obtained by choosing an $H^p$ atomic decomposition 
for $f_1$ and an $h^p$ atomic decomposition for $f_2$, which we discuss next.
Choose a function $\psi \in\ccinf(\erre^N)$ supported in the cube $Q_{0}^{2}$ centered at the origin with side length 2, such that $0\le \psi(x)\le 1$ and $\displaystyle{\sum_{k\in\ze^N}\psi(x-k)=1}$ for all $x \in \R^N$.
Setting $\psi_k(x) := \psi(x-k) \in C_{c}^{\infty}(Q_{k}^{2})$, we may define $\mu_k := |Q^2_k|^{1/p}\|\psi_kf_2\|_{L^{\infty}(Q^2_k)}$ and $a_{k}\,:=\,\psi_kf_2/\mu_k$ whenever $\psi_kf_2\ne0$, obtaining the  decomposition 
\begin{equation}\label{f2}
f_2=\sum_k \psi_k f_2= \sum_k \mu_{k}a_k.
\end{equation}
Clearly $\psi_k f_2\in\ccinf (Q^2_k)$; thus $a_{k}$ is a rough atom, since $\|a_{k}\|_{L^{\infty}}\leq |Q^{2}_{k}|^{-1/p}$. We claim that
 \begin{equation}\label{desig}
\sum _k|\mu_k|^p\le C\|f\|_{h^p}^p 
\end{equation}
 To show this control, we recall that given $R>0$ there exists $C>0$ such that
\begin{equation*}
|\varphi*f(x)| \le C\, \mathfrak{m}f(y),\quad \quad |x-y|\le R,
\end{equation*}
where $\mathfrak{m}$ denotes the \textit{grand maximal function} used to define an equivalent norm in $h^p$, i.e. $\|f\|_{h^p} \simeq \|\mathfrak{m}{f}\|_{L^p}$ (see \cite[Theorem 1]{G}). Then 
\[
\sup_{Q_k^2}|\varphi*f(x)|^p\le C\frac{1}{|Q_k^2|}\int_{Q_k^2}[{\mathfrak{m}}f(y)]^p\,dy
\]
which implies that 
$\sum_k|Q_k^2|\sup_{Q_k^2}|\varphi*f(x)|^p \le C\|\mathfrak{m}f\|^p_{L^p}\simeq\|f\|^p_{h^p}$, 
because the family of cubes $\{Q^2_k\}$ has the bounded intersection property. This concludes \eqref{desig}.   

In the previous proof, we could simplify taking $\psi\,:=\, \chi_{B(0,1)}$. In this case, the atoms $a_{k}$ in \eqref{f2} are not smooth. The advantage of using a cut-off function is that for $f\in C_{c}^{\infty}(\R^N)$, thus  $f_{1},f_{2}\in C_{c}^{\infty}(\R^N)$,  we may conclude the rough atoms $a_{k}=\psi_{k}f_{2}/\mu_{k} \in C_{c}^{\infty}(Q^{2}_{k})$ and the series  $\sum_k \mu_k a_k$ reduces to a finite sum, since there is finite indices such that $Q^{2}_{k} \cap supp(f) \neq  \varnothing$. {In this particular setting, we may also obtain a finite atomic decomposition for bounded atoms in $H^{p}$ for $f_{1}$ (see \cite[Theorem 3.1 and Remarks 3.2 and 3.3]{MSV})}. Invoking the atomic decomposition for $H^p$, we may write
\begin{equation}\label{sum}
f= \sum_k \mu_{1,k}a_{1,k}+ \sum_k\mu_{2,k}a_{2,k}
\end{equation} 
where $a_{1,k}$ are $H^p$ atoms and $a_{2,k}$ are rough atoms, and 
$\sum _k \left(|\mu_{1,k}|^p + |\mu_{2,k}|^p \right)\le C\|f\|_{h^p}^p.$
From the previous discussion, we may also obtain an atomic decomposition where the sum \eqref{sum} is finite, assuming $f \in \left(h^{p}\cap C_{c}^{\infty}\right)(\R^{N})$.

\begin{lemma} \label{remarkcatarina}
If $f \in \S(\R^N)$, then clearly $f_2 \,:= \,\varphi * f \in \S(\R^N)$. The partial sums  $\displaystyle{\sum_{|k|\le M}\mu_k a_k}$ converge to $f_2$ in $\S(\erre^N)$, and \textit{a fortiori} in $\Lambda^r(\erre^N)$ for any $r>0$.
\end{lemma}

\begin{proof}
First, we claim $\displaystyle{\sum_{|k|\le j} \psi_k f_2}$ converges to $f_2$ in $\S(\erre^N)$ as $j \to \infty$, where $\psi_{k}$ is given above.  
We point out that, due to compact support of $\psi$, for each $x \in \R^N$, there are at most $2^N$ indexes $k \in \Z^N$ such that $\psi_{k}(x)\,:=\, \psi(x-k) \neq 0$. Thus, the sum $\sum_{k\in\ze^N}\psi(x-k)=1$ is always finite,  and for each non-zero multi-index $\gamma \in \Z^N$  we have
$\sum_{k\in\ze^N}\partial^{\gamma} \psi_{k}(x)=0$. 
The conclusion follows from the identity 
\begin{eqnarray*}
 \partial^{\beta} \left( \sum_{|k|\le j} \psi_k(x)f_2(x) \right) &=& \left( \sum_{|k|\le j} \psi_k(x) \right)\partial^{\beta}f_2(x)
+\sum_{0<\gamma \leq \beta} \binom{\beta}{\gamma}  \left( \sum_{|k|\le j} \partial^{\gamma}\psi_k(x) \right) \partial^{\beta-\gamma}f_2(x).
\end{eqnarray*}
The second claim follows by canonical embedding, for instance if $0<r<1$, then
\begin{align*}
	[f]_r 
	&\leq \sup_{\substack{x \in \R^N\\ |h|<\delta}} \dfrac{|f(x+h) - f(x)|}{|h|^r} + \sup_{\substack{x \in \R^N\\ |h|\geq \delta}} \dfrac{|f(x+h) - f(x)|}{|h|^r}\\
	&\leq \|\nabla f \|_{L^\infty}. \sup_{\substack{x \in \R^N\\ |h|<\delta }} |h|^{1-r} + \delta^{-r} \sup_{\substack{x \in \R^N\\ |h|\geq \delta}} |f(x+h)| + |f(x)|\\
	&\leq \delta^{1-r} \|\nabla f \|_{L^\infty} + \delta^{-r}\|f \|_{L^\infty}.
	\end{align*}
The general case can be easily extended. \qed
\end{proof}

\subsection{A generalized Calder\'on-Zygmund decomposition}

{Consider $m\in\ene$, $N/(N+m)<p<1$, and denote $\gamma_{p}:=N\left(\frac{1}{p}-1\right)$, $N_{p}:= \lfloor \gamma_{p} \rfloor$ and $r_{p} \; := \; \gamma_{p}-N_p$. Recall that if $N_p < \gamma_p$, then $f \in \Lambda^{\gamma_{p}}(\R^N)$ means ${\partial}^{\alpha}f \in \Lambda^{r_{p}}(\R^N)$ for all $|\alpha| = N_p$; if $N_p = \gamma_p$ then  ${\partial}^{\alpha}f \in \Lambda^{1}(\R^N)$ for all $|\alpha| = \gamma_p -1$. 
{Furthermore, when $p=1$ we consider $\bmo(\erre^N)$ instead of $\Lambda^{\gamma_{p}}(\erre^N)$.}

In this section, we are interested in obtaining the following result:

\begin{proposition}\label{czespecial}
{Let $m,p$ and $\gamma_{p}$ as before. Given $f \in \left(C^{m}_c \cap h^{p}\right)(\erre^N)$ there exists a sequence of functions $f_n\in C^{m}_c(\erre^N)$} with the following properties:
\begin{enumerate}
  \item[(i)] there exists $C>0$ such that $\|f_n\|_{h^p}\le C \|f\|_{h^p}$;
  \item[(ii)] { $\displaystyle\lim_{n\to\infty}\norma{f-f_n}_{\Lambda^{\gamma_{p}}}=0$, when $N/(N+m)<p<1$, and $\displaystyle\lim_{n\to\infty}\norma{f-f_n}_{bmo}=0$, when $p=1$}.
  \end{enumerate}
\end{proposition}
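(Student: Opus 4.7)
The plan is to build the sequence by combining the splitting $f=f_1+f_2$ from \eqref{a1} with a truncation of the cutoff decomposition of the good part. Fix $\varphi\in\ccinf$ with enough vanishing moments as in Section \ref{S4}, set $f_1:=f-\varphi*f$ and $f_2:=\varphi*f$, and let $\{\psi_k\}_{k\in\Z^N}$ be the partition of unity subordinated to $\{Q_k^2\}$ already introduced. Since $f\in C^m_c$ and $\varphi\in\ccinf$, both $f_1\in C^m_c(\R^N)$ and $f_2\in\ccinf(\R^N)\subset\S(\R^N)$. I would take
\[
 f_n \;:=\; f_1 \,+\, \sum_{|k|\leq n}\psi_k f_2,
\]
which lies in $C^m_c(\R^N)$ for every $n$.

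For item (i), I would exhibit $f_n$ as a partial sum of the hybrid atomic decomposition already constructed. From \eqref{a1} one has $\|f_1\|_{H^p}\leq C\|f\|_{h^p}$. The tail $S_n:=\sum_{|k|\leq n}\psi_k f_2$ is precisely the truncation of the $h^p$-atomic decomposition $f_2=\sum_k \mu_k a_k$ with $a_k=\psi_k f_2/\mu_k$ and $\mu_k=|Q_k^2|^{1/p}\|\psi_k f_2\|_\infty$; since each $a_k$ is a rough $h^p$-atom, the estimate \eqref{desig} yields $\|S_n\|_{h^p}^p\leq\sum_{|k|\leq n}|\mu_k|^p\leq C\|f\|_{h^p}^p$ uniformly in $n$. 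Adding the two contributions (and using the quasi-triangle inequality for $h^p$) gives $\|f_n\|_{h^p}\leq C\|f\|_{h^p}$.

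For item (ii), I would prove convergence in the stronger topology of $\S(\R^N)$ and then deduce Hölder/bmo convergence. The key identity is $f-f_n=f_2-S_n=\sum_{|k|>n}\psi_k f_2$. Since $f_2\in\ccinf\subset\S$, Remark \ref{remarkcatarina} applies verbatim to give $S_n\to f_2$ in $\S(\R^N)$, hence $f-f_n\to 0$ in $\S$. To transfer this to $\Lambda^{\gamma_p}$, I would invoke the interpolation-type estimate reproduced at the end of the Remark,
\[
[g]_r\;\leq\;\delta^{1-r}\|\nabla g\|_\infty+\delta^{-r}\|g\|_\infty,
\]
optimized in $\delta$, applied to the derivatives $D^\alpha(f-f_n)$ up to order $N_p$ in order to cover any $\gamma_p\in(0,m)$, possibly exceeding $1$. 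For $p=1$, the trivial bound $\|g\|_{bmo}\leq 2\|g\|_\infty$ together with $\|f-f_n\|_\infty\to 0$ (which is part of $\S$-convergence) gives the $bmo$ statement.

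The only mildly delicate point is the passage from Schwartz convergence to convergence in $\Lambda^{\gamma_p}$ when $\gamma_p\geq 1$: one needs to iterate the Remark's interpolation estimate across several derivative orders and check uniformity in the chosen $\delta$. This is routine once one notes that $\S$-convergence controls \emph{every} $L^\infty$ seminorm of derivatives simultaneously. In fact, because $f_2$ has compact support (as $f$ and $\varphi$ do), only finitely many $\psi_k f_2$ are non-zero, so the truncation is stationary for $n$ large and both (i) and (ii) ultimately follow from finite bookkeeping, the entire argument above furnishing uniform constants independent of $n$.
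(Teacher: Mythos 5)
Your closing observation --- that $f_2 \in C^\infty_c(\R^N)$ makes the sum $\sum_k \psi_k f_2$ finite, so that $S_n = f_2$ and hence $f_n = f$ once $n$ is large --- is correct, and it shows that your construction collapses to the trivial choice $f_n = f$. That does literally verify (i) and (ii): as printed, the statement is vacuously satisfiable. But it bypasses what the proposition is actually for. Its sole use is in the proof of Lemma~\ref{6.2}, where it is essential not only that $f_n \to f$ in $\Lambda^{\gamma_p}$ with $\|f_n\|_{h^p} \le C\|f\|_{h^p}$, but that each $f_n$ comes packaged with a \emph{finite} smooth atomic decomposition $f_n = \sum_{k=1}^{k_n} \lambda_{k,n} a_{k,n}$ into bounded $h^p$-atoms with the uniform coefficient bound $\sum_k |\lambda_{k,n}|^p \le C \|f\|_{h^p}$.

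The paper's proof delivers exactly this by running the generalized Calder\'on--Zygmund decomposition (Proposition~\ref{decomp_f_g_b}) at heights $\alpha_i = 2^i$, truncating both the range of $i$ to a finite window $L < i \le N_0$ and, within each height, the cubes to those of side $\ge \delta_i$ (Corollary~\ref{coromain}), showing that the remainder $R$ can be made arbitrarily small in $\Lambda^{\gamma_p}$ while the main part $f^\flat = \sum_{i,k} A_{i,k}$ is a finite sum which, after renormalizing $a_{i,k}=c^{-1}2^{-i}|B_{i,k}|^{-1/p}A_{i,k}$, satisfies $\sum_{i,k}|\lambda_{i,k}|^p \le c' \int (\M f)^p \le C\|f\|_{H^p}$. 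None of this comes for free from the trivial $f_n=f$: for a general $f\in(C^m_c\cap h^p)(\R^N)$ one cannot simply assert a finite bounded atomic decomposition with the right norm control --- this is precisely the subtlety the authors flag via Bownik's example \cite{B} just after \eqref{atm}. So the gap in your proposal is a missing idea rather than a wrong computation: you obtain approximants without the atomic structure that the downstream argument requires, and the CZ decomposition, Corollary~\ref{coromain}, and the stopping scale $\delta_i=(\epsilon 2^i)^{1/(1-r_p)}$ --- the technical heart of Section~\ref{S4} --- play no role under your reading.
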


The proof is reduced for the case when {$f\in (H^p\cap C^{m}_c)(\erre^N)$}. In fact, using the previous decomposition  \eqref{a1} we may split $f=f_1+f_2$ with {$f_1 \in (H^p\cap C^{m}_c)(\erre^N)$} and $f_2 \in C_{c}^{\infty}(\R^{N})$. The conclusion follows from Lemma \ref{remarkcatarina}. To simplify the notation, we write $f$ instead of  $f_1$ and assume that {$f\in (H^p\cap C^{m}_c)(\erre^N)$} in order to prove the result. 

Let us first fix some notations in the spirit of Stein`s book \cite[Chapter III, Section 2]{St}. Namely, $\M$ will denote the grand maximal function, and $\M_0=M_\varphi$ is the maximal function associated with the single function $\varphi \in S(\R^{N})$ satisfying $\int \varphi \neq 0 $; i.e., 
$M_\varphi f(x)=\sup_{t>0}|f*\varphi_t(x)|$. The grand maximal function is associated with a collection $\mathfrak{S}\subset\S(\erre^N)$ of rapidly decreasing functions, 
\begin{equation*}
\displaystyle \M f(x)=\sup_{\phi\in\mathfrak{S}}M_\phi f(x).
\end{equation*}
We may assume without loss of generality that $\mathfrak{S}$ is a bounded subset 
of $L^1(\erre^N)$. In particular, if $u\in L^\infty(\erre^N)$ then 
$\M u\in L^\infty(\erre^N)$.

Next we state an extension of the Calder\'on-Zygmund decomposition for smooth functions in Hardy spaces. 

	\begin{proposition}\label{decomp_f_g_b}
		Let $m \in \N$ and $p$ as before. For each {$f\in \left(C^{m}_c\cap H^p\right)(\erre^N)$} and $\alpha>0$, there exist a decomposition $f=g+b$ with $b := \,\sum_{k=1}^\infty \beta_k$, and a collection of cubes $\{Q_k^*\}$ with sides of length $\ell_k$  parallel to the axes, such that
		\begin{enumerate}
			\item [(i)] $|g(x)|\le c\,\alpha$ almost everywhere;
			\item [(ii)] {Each $\beta_k \in C_{c}^{m}(Q^*_k)$} satisfies the following properties:
			{
				\begin{align}
					\|D^{i} \beta_k\|_{L^\infty}&\le c \,\sum_{j=0}^{i} \ell_k^{{m}+j-i}\;  \norma{D^{{m}+j}f}_{L^{\infty}}, \quad \forall \, i \in \{0,1,...,m-1\}, \label{a3_{m}} \\
					\|D^{i} \beta_k\|_{L^\infty}&\le c \,  \norma{D^{i}f}_{L^{\infty}}, \quad \forall \, i \in \{1,2,...,m\}, \label{a4_{m}} \\
					\int_{\R^N}(\M_0 \beta_k)^p\,dx &\le c\int_{Q_k^*}(\M f)^p\, dx, \label{des_integral_{j_p}} \\	
					\int x^{\gamma}\beta_k(x) dx&=0, \,\, \quad   \forall \, |\gamma|  \leq m-1. \label{moment_{j_p}}
			\end{align}}
			\item [(iii)] The cubes $\{Q_k^*\}$ have the bounded intersection property, and 
			$$ \O \, :=\, \{x:\ \M f(x)>\alpha\}=\bigcup_k Q_k^*.$$
		\end{enumerate}
	\end{proposition}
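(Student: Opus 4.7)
The plan is to implement a generalized Calder\'on-Zygmund decomposition of $\Omega := \{\M f > \alpha\}$ in the spirit of Stein~\cite[Chapter III, \S2]{St}, but with each bad piece $\beta_k$ absorbing a polynomial correction of degree at most $m-1$ so that the full vanishing-moment condition \eqref{moment_{j_p}} holds; this upgrade is exactly what is needed to place $\beta_k$ on the $H^p$-atomic scale when $p > N/(N+m)$. Items (i), (iii), and the integral bound \eqref{des_integral_{j_p}} should transfer from Stein's argument with straightforward adaptations once the correction is in place; the genuinely new technical content lives in the moment construction and in the derivative estimates \eqref{a3_{m}}--\eqref{a4_{m}}, where Proposition~\ref{prop_3.2} will serve as the Poincar\'e engine.

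First I would fix a Whitney decomposition of $\Omega$ into cubes $\{Q_k\}$ with dilates $\{Q_k^*\}$ of bounded intersection and $\ell_k := \ell(Q_k^*) \simeq \dist(Q_k, \Omega^c)$, together with a smooth partition of unity $\{\eta_k\}$ subordinate to $\{Q_k^*\}$ satisfying $\sum_k \eta_k = \chi_\Omega$ and $|\partial^\beta \eta_k| \le C_\beta\, \ell_k^{-|\beta|}$. For each $k$, set $P_{f,k}(x) := \int f(y)\,\phi_{Q_k}(x,y)\,dy$ with $\phi_{Q_k}$ as in \eqref{def_phi_Q}, built from an orthonormal basis $\{e_i\}$ of $\P_{m-1}$ for the weighted inner product $\langle p,q\rangle_S := \int p\,\bar q\,\tilde\eta_k\,dx$, where $\tilde\eta_k := \eta_k/\int \eta_k$. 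Define $\beta_k := (f - P_{f,k})\,\eta_k$ and $g := f - \sum_k \beta_k$. The projection design of $\phi_{Q_k}$ is what forces \eqref{moment_{j_p}}: for each basis element $e_j$ one can unwind the definition of $P_{f,k}$ to see that $\int \beta_k\,\overline{e_j}\,dx = 0$, and since $\{e_j\}$ spans $\P_{m-1}$ this gives the vanishing of all moments $\int \beta_k(x)\,x^\gamma\,dx$ up to order $m-1$. The bound $|g| \le c\,\alpha$ a.e.\ follows from $|f| \le \M f \le \alpha$ on $\Omega^c$ together with a pointwise bound $|P_{f,k}\eta_k| \lesssim \alpha$ on $Q_k^*$ supplied by Lemma~\ref{Lema4_M}(iv) and the Whitney stopping criterion, as in \cite[pp.~101--103]{St}. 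For \eqref{des_integral_{j_p}}, I would split $\M_0 \beta_k$ into a near-field contribution on $\{|x - x_k| \le c\,\ell_k\}$ directly dominated by $\M f$, and a far-field contribution where the $m$ vanishing moments of $\beta_k$ and a Taylor expansion of $\varphi_t$ produce the decay $\M_0 \beta_k(x) \lesssim \alpha\,\ell_k^{N+m}|x - x_k|^{-N-m}$; this tail is $p$-integrable precisely because $p > N/(N+m)$, and the bounded intersection property collects everything into the sum over $k$.

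The delicate part is the pair \eqref{a3_{m}}--\eqref{a4_{m}}. Applying Leibniz to $\beta_k = (f-P_{f,k})\eta_k$ together with $|\partial^{\alpha-\gamma}\eta_k| \lesssim \ell_k^{-(i-|\gamma|)}$ reduces everything to estimating $\partial^\gamma(f - P_{f,k})$ on $Q_k^*$. For \eqref{a3_{m}} with $i \le m-1$, I would combine Miyachi's identity $\partial^\gamma P_{f,k} = P_{\partial^\gamma f, k} + R_\gamma$ from \cite[p.~87]{Mi} with Proposition~\ref{prop_3.2} applied at order $m - |\gamma|$, yielding a control of the form $|\partial^\gamma f - \partial^\gamma P_{f,k}| \lesssim \sum_{j=0}^{|\gamma|} \ell_k^{m+j-|\gamma|}\|D^{m+j}f\|_\infty$; absorbing the Leibniz weight $\ell_k^{-(i-|\gamma|)}$ and summing in $\gamma$ telescopes to the desired bound. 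For \eqref{a4_{m}} with $i \in \{1,\ldots,m\}$, the diagonal term $\gamma = \alpha$ is bounded by $\|D^i f\|_\infty$ directly (with the case $i = m$ being automatic since $D^m P_{f,k} = 0$), and the off-diagonal terms are treated by re-running Proposition~\ref{prop_3.2} at order $i$ rather than $m$ to obtain $|\partial^\gamma(f - P_{f,k})| \lesssim \ell_k^{i-|\gamma|}\|D^i f\|_\infty$ on $Q_k^*$, so that the Leibniz weight $\ell_k^{-(i-|\gamma|)}$ is exactly cancelled.

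The main obstacle is the bookkeeping that unifies these two different Poincar\'e scalings while keeping all constants uniform in $k$; the uniformity is guaranteed by the scaling homogeneity of $\phi_{Q_k}$ in $\ell_k$ encoded in Lemma~\ref{Lema4_M}(iv). Once that is in hand, the remainder is a direct transcription of the Whitney-based decomposition.
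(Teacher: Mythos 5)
Your blueprint matches the paper's proof: the generalized Calder\'on--Zygmund decomposition with the Whitney partition $\{\eta_k\}$, the polynomial correction $P_{f,k}(x)=\int f(y)\phi_{Q_k}(x,y)\,dy$ from Lemma~\ref{Lema4_M}, and the moment computation via the orthonormal basis $\{e_i\}$ of $\P_{m-1}$ for $\langle p,q\rangle_S=\int p\,\bar q\,\tilde\eta_k$ are exactly the paper's ingredients, and it too dispatches (i), (iii) and \eqref{des_integral_{j_p}} as ``\textit{is bis idem}'' Stein. Your route to \eqref{a3_{m}} via Leibniz, Miyachi's identity $\partial^\gamma P_{f,k}=P_{\partial^\gamma f,k}+R_\gamma$, and the pointwise Poincar\'e estimate of Proposition~\ref{prop_3.2} applied to $\partial^\gamma f$ together with the uniform bound $h^{*}_{L,a}\lesssim\|h\|_{L^\infty}$ is also the paper's.

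Your treatment of \eqref{a4_{m}}, however, has a genuine gap. You propose to ``re-run Proposition~\ref{prop_3.2} at order $i$ rather than $m$'' to obtain $|\partial^\gamma(f-P_{f,k})|\lesssim\ell_k^{i-|\gamma|}\|D^i f\|_{L^\infty}$. But Proposition~\ref{prop_3.2} applied at order $i$ compares $f$ to a \emph{different} Miyachi polynomial, of degree at most $i-1$ and built from an order-$i$ reproducing kernel --- not to the fixed polynomial $P_{f,k}\in\P_{m-1}$ that actually defines $\beta_k$. The estimate you write down therefore does not follow from the re-run alone: one must still compare the two polynomials. The paper bridges this by introducing the classical Taylor polynomial $P^{i-1}_{x_k,f}$ of $f$ at $x_k$, of degree $i-1\le m-1$. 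Taylor's theorem gives directly $|\partial^\gamma f-\partial^\gamma P^{i-1}_{x_k,f}|\lesssim\ell_k^{i-|\gamma|}\|D^i f\|_{L^\infty}$ on $Q_k^*$, and the reproducing property (iii) of Lemma~\ref{Lema4_M}, applied to $P^{i-1}_{x_k,f}\in\P_{m-1}$, yields
\[
P^{i-1}_{x_k,f}(x)-P_{f,k}(x)=\int_{Q_k}\bigl(P^{i-1}_{x_k,f}(y)-f(y)\bigr)\,\phi_{Q_k}(x,y)\,dy,
\]
whose $\partial^\gamma_x$-derivative is $\lesssim\ell_k^{i-|\gamma|}\|D^i f\|_{L^\infty}$ by Lemma~\ref{Lema4_M}(iv) together with the Taylor bound $\|P^{i-1}_{x_k,f}-f\|_{L^\infty(Q_k)}\lesssim\ell_k^i\|D^i f\|_{L^\infty}$. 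Adding the two pieces produces the estimate you claimed, but this Taylor-plus-reproducing step is essential and is not something Proposition~\ref{prop_3.2} supplies by itself.
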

	\begin{proof}
		We define $b=\sum_k\beta_k$ and $g=f-b$ analogously as the generalized Calder\'on-Zygmund decomposition {\cite[pp.101]{St}}, precisely: fix a partition of unity $0\le\eta_k(x)\le1$  for $\O$ subordinated to the covering $\{Q_k^*\}$ satisfying $\supp\eta_k\subset Q_k^*$ and 
		$|\partial^\gamma\eta_k|\le C_{\beta,\gamma}\ell_k^{-|\gamma|}$, 
		and
		\begin{equation}\label{bk_{j_p}}
			\beta_k(x) = \, (f(x)-P_{f,k}(x))\eta_k(x), 
		\end{equation}
		where $\displaystyle{P_{f,k}(x) = \int f(y) \phi_{Q_k} (x,y)dy}$ is the polynomial with degree at most $ m-1$ 
		given by Lemma \ref{Lema4_M}, and 
		\begin{align*}
			\phi_{Q_k} (x,y) &= \ell_k^{-N} \left[ \sum_{i=1}^{{m}} \pi_i\left(\dfrac{x-x_k}{\ell_k} \right) \overline{\pi_i\left(\dfrac{y-x_k}{\ell_k} \right)} \; \right] \eta_k(y) 
= \left[ \sum_{i=1}^{{m}} e_i\left(x \right) \overline{e_i\left(y \right)} \; \right] \tilde{\eta_k}(y).
		\end{align*}
		Here $\tilde{\eta_k}\,  :=\,  {\eta_k}{\left(\int \eta_k dx\right)^{-1}}$ and $e_i(x) \,  := \, \left(\int \eta_k dx\right) \ell_k^{-N/2} \pi_i\left(\frac{x-x_k}{\ell_k} \right)$, then $\{e_1, ..., e_{m}\}$ defines an orthonormal basis of $(\P_{m-1}, \inner{\cdot}{\cdot}_{S})$, where $\inner{p}{q}_{S} = \int p \bar{q}\tilde{\eta_k}$. 
		
		The conclusions of claims (i) and (iii), as well as the inequality \eqref{des_integral_{j_p}}, follow similarly to the arguments presented in \cite{St} and will therefore be omitted. The property \eqref{moment_{j_p}} follows directly from \eqref{bk_{j_p}}, precisely 
		\begin{eqnarray*}
			\int P_{f,k}(x)\eta_k(x)\overline{e_{j_0}}(x) \;dx 
			&=& \int \left( \int f(y)\left[ \sum_{i=1}^m e_i\left(x \right) \overline{e_i}\left(y \right) \right] \tilde{\eta_k}(y) \;dy\right)\eta_k(x)\overline{e_{j_0}}(x) \;dx\\
			&=& \int f(y) \tilde{\eta_k}(y) \left[ \sum_{i=1}^m \overline{e_i} \left(y \right) \left( \int  e_i\left(x \right) \eta_k(x)\overline{e_{j_0}}(x) \; dx\right) \right] dy\\
			&=& \int f(y) \eta_k(y) \left[ \sum_{i=1}^m \overline{e_i} \left(y \right) \left( \int  e_i\left(x \right) \tilde{\eta_k}(x)\overline{e_{j_0}}(x) \; dx \right)  \right] dy\\
			&=& \int f(y) \eta_k(y) \overline{e_{j_0}}(y) \;dy,
		\end{eqnarray*}
		for each ${j_0} \in \left\{1,...,{m}\right\}$, thus $\displaystyle{\int \beta_k(x)q(x) dx = 0}$ for all $q \in \P_{m-1}$. {Since $N_p \leq m-1$ we get the desired property.} 
		
It follows from inequality \eqref{eq_M} and the uniform control $f^{*}_{L,a}(x) \leq C(N) \|f\|_{L^{\infty}}$ for all $f \in L^{\infty}(\R^{N})$ that
		$$|\beta_k(x)| \leq |f-P_{f,k}|(x) \leq C_{{m},p,M} \ell_k^{{m}} \sum_{|\alpha|={m}} (\partial^{\alpha}f)_{L,\sqrt{N}\ell_k}^{*}(x) \lesssim \,\ell_k^{m} \,\|D^{m} f\|_{L^\infty},$$
		proving the inequality \eqref{a3_{m}} for $i=0$. For each fixed $k \in \N$ and multi-index $\alpha$ with $|\alpha|=j \in \{1,2,...,m\}$, let $\displaystyle P^{j-1}_{x_k,f}$ denote the Taylor polynomial of $f$ centered at $x_k$ with degree $j-1$. Then
		$$\left| f(x) - P^{j-1}_{x_k,f}(x) \right| \lesssim |x-x_k|^{j} \sup_{|\gamma|=j} \left|\partial^{\gamma}f(\tilde{x_k})\right|,$$
		for some $\tilde{x_{k}}\in [x,x_k]$. In particular, $\left| f(x) - P^{j-1}_{x_k,f}(x) \right| \lesssim \ell_k^{j} \|D^j f\|_{L^\infty}$ for each $x \in Q_k^*$. It follows from the Leibniz rule that
		\begin{align}\label{equation_AB}
			|\partial^{\alpha} \beta_k(x)|  & \lesssim  
			\sum_{0\leq\gamma < \alpha} \left|  \partial^{\gamma}(f-P_{f,k})(x) \partial^{\alpha-\gamma}\eta_k(x) \right| 
			+
			\sum_{|\gamma|=j} \left|  \partial^{\gamma}f(x)\eta_k(x) \right| \nonumber\\
			& \lesssim 
			\sum_{0\le \gamma < \alpha} \ell_k^{|\gamma|-j}|\partial^{\gamma}(f-P_{f,k})(x)| +\|D^{j}f\|_{L^{\infty}} \nonumber \\
			& \lesssim 
			\sum_{0\le \gamma < \alpha} \ell_k^{|\gamma|-j} \left[ \underbrace{ |\partial^{\gamma}f(x) - \partial^{\gamma} P^{j-1}_{x_k,f}(x)|}_{(A)} + \underbrace{|\partial^{\gamma}(P^{j-1}_{x_k,f} - P_{f,k})(x)|}_{(B)} \right] +\|D^{j}f\|_{L^{\infty}}. 
		\end{align}
		Since $\partial^{\gamma}P^{j-1}_{x_k,f} = P^{j-|\gamma|-1}_{x_k,\partial^{\gamma}f}$, we get
		\begin{equation*}
			(A) = |\partial^{\gamma}f(x) - P^{j-|\gamma|-1}_{x_k,\partial^{\gamma}f}(x)| \lesssim \ell_k^{j-|\gamma|} \| D^{j-|\gamma|} (\partial^{\gamma}f)\|_{L^\infty} \simeq \ell_k^{j-|\gamma|} \| D^{j} f\|_{L^\infty}.
		\end{equation*}
		On the other hand, it follows from (iii) of Lemma \ref{Lema4_M} that 
		$$P^{j-1}_{x_k,f}(x) = \int_{Q_k} P^{j-1}_{x_k,f}(y) \phi_{Q_k}(x,y) \; dy, \text{ for all } x \in Q_k.$$
		This way,
		$$P^{j-1}_{x_k,f}(x) - P_{f,k}(x) = \int_{Q_k} \left[ P^{j-1}_{x_k,f}(y) -f(y) \right]\phi_{Q_k}(x,y) \; dy$$
		and
		\begin{eqnarray*}
			(B) = |\partial^{\gamma}(P^{j-1}_{x_k,f} - P_{f,k})(x)|
			&=& \left| \int_{Q_k} \left( P^{j-1}_{x_k,f}(y) -f(y) \right)\partial^{\gamma}_x \phi_{Q_k}(x,y) \; dy \right| \\
			&\lesssim& \ell_k^j \|D^jf\|_{L^\infty} \ell_k^{-N-|\gamma|}\ell_k^N \\
			&=& \ell_k^{j-|\gamma|}\|D^jf\|_{L^\infty}
		\end{eqnarray*}
		Substituting into \eqref{equation_AB}, we obtain
		\begin{eqnarray*}
			|\partial^{\alpha} \beta_k(x)| &\lesssim& \sum_{0\le \gamma < \alpha} \ell_k^{|\gamma|-j} \left[ \ell_k^{j-|\gamma|} \| D^{j} f\|_{L^\infty} + \ell_k^{j-|\gamma|}\|D^jf\|_{L^\infty} \right] +\|D^{j}f\|_{L^{\infty}},
		\end{eqnarray*}
the we have \eqref{a4_{m}}. Furthermore, for each $\alpha$ such that $|\alpha|=i \in \{1,2,...,{m-1}\}$, by the Leibniz rule we have
		\begin{eqnarray}
 \label{eq_beta_m} \quad \quad  |\partial^{\alpha} \beta_k(x)|  \lesssim 
			\sum_{0\leq\gamma \leq \alpha} \left|  \partial^{\gamma}(f-P_{f,k})(x) \partial^{\alpha-\gamma}\eta_k(x) \right| \lesssim 
			 \sum_{0\le \gamma \le \alpha} \ell_k^{|\gamma|-i}|\partial^{\gamma}f(x) - \partial^{\gamma} P_{f,k}(x)|.
		\end{eqnarray}
Since $\partial^{\gamma} P_{f,k}(x) = P_{\partial^{\gamma}f,k}(x) + (II)$ (see \cite[p. 87]{Mi}), in which
		$$|(II)| \leq C_{{m},M,\gamma} \, \ell_k^{{m}-|\gamma|}  \sum_{|\beta|={m}} (\partial^{\beta}f)_{L,\sqrt{N}\ell_k}^{*}(x) \lesssim \ell_k^{{m}-|\gamma|}\|D^{m} f\|_{L^\infty},$$
		we get
		\begin{eqnarray*}
			|\partial^{\gamma}f(x) - \partial^{\gamma} P_{f,k}(x)| \lesssim \ell_k^{{m}}\;  \sum_{|\theta|={m}} \norma{\partial^{\theta+\gamma}f}_{L^{\infty}} + \ell_k^{{m}-|\gamma|}\|D^{m} f\|_{L^\infty}.
		\end{eqnarray*}
Substituting this into inequality \eqref{eq_beta_m} yields
		\begin{eqnarray*}
			\|D^i \beta_k\|_{L^\infty} \lesssim \sum_{j=0}^{i} \ell_k^{{m}+j-i}\;  \norma{D^{{m}+j}f}_{L^{\infty}},
		\end{eqnarray*}
		concluding the proof with the inequality \eqref{a3_{m}}. \qed
		
	\end{proof}
 
\begin{corollary}\label{coromain}
Consider $m\in\ene$ and $N/(N+m)<p\leq1$. For each {$f\in \left(C^{m}_c\cap H^p\right)(\erre^N)$} and $\alpha>0$, consider the decomposition $f=g+b$ given by Proposition \ref{decomp_f_g_b}.   
Then, given $\eps>0$ we can rewrite $b=b^\#+\rho$, where 
$b^\#=\sum_{k\in F} \beta_k$ for some finite set $F\subset\ene$, such that $\|\rho\|_{\Lambda^{\gamma_{p}}}\le\eps$, when $N/(N+m)<p<1$, and $\|\rho\|_{bmo}\le\eps$ when $p=1$.
\end{corollary}

\begin{proof}
Let $F$ be the set of indices $k \in \Z$ such that $\ell_k \ge \delta$, where {$0<\delta<1$} is a number to be determined. Note that $F$ is finite because $\O=\, \{x:\ \M f(x)>\alpha\}$ has compact closure. Set $\rho\, :=\, b-b^\#$ and $c_f \; := \displaystyle \max_{0 \leq i \leq m}{\norma{D^{i}f}_{L^{\infty}}}$, since $ f\in C^{m}_c(\R^N)$. 
As the cubes $\{Q_k^*\}$ that contain $\supp \beta_k$ have the bounded intersection property, then the number of cubes with side $\ell_k <{\delta}$ is finite {(and does not depend on $\alpha$)}. 
{Since $N_p+1\leq m$, the inequality \eqref{a4_{m}} implies that 
	$$\|D^{N_p+1}\rho\|_{L^\infty} = \displaystyle \left\|\sum_{\substack{k \in \Z  \\ \ell_k<\delta}} D^{N_p+1}\beta_k\right\|_{L^\infty} \lesssim c_f$$
	and from \eqref{a3_{m}} we have $\|D^i\rho\|_{L^\infty} \lesssim c_f \delta^{m-i}$, for all $i \in \{0,1,...,m-1\}$. In particular, $\|D^{N_p} \rho\|_{L^\infty} \lesssim \delta^{m-N_p} \lesssim \delta$ and $\|\rho\|_{L^\infty} \lesssim \delta^{m} \lesssim \delta^{1-r_{p}}$, since
	$$1-r_p \leq (1-r_p) + \gamma_p = 1+N_p \leq m.$$
	Note that, by Remark \ref{remarkcatarina} and the previous inequalities, we have  
	\begin{eqnarray*}
		[\rho]_{\Lambda^{\gamma_{p}}} &=&  [D^{N_p}\rho]_{\Lambda^{r_p}} \leq \|D^{N_p+1}\rho\|_{L^\infty}\delta^{1-r_p} + \|D^{N_p}\rho\|_{L^\infty}\delta^{-r_p} \lesssim \delta^{1-r_p}.
	\end{eqnarray*}
Thus, $\|\rho\|_{\Lambda^{\gamma_{p}}} = [\rho]_{\Lambda^{\gamma_{p}}} + \|\rho\|_{L^\infty} \lesssim \delta^{1-r_p} = \varepsilon$, for an appropriately chosen $\delta$.
	For the case $p=1$, it suffices to note that $\|\rho\|_{bmo} \leq 3\|\rho\|_{L^{\infty}} \lesssim \eps$}. 
	\qed
\end{proof}


\subsection{Proof of Proposition \ref{czespecial}}

Let  $f\in (H^p\cap C_{c}^{m})(\erre^N)$ and fix a small number $\epsilon>0$.  For each $i \in \Z$, we choose a decomposition $f=g_i+b_{i}=g_i+b_i^\# +\rho_i$ with the properties given by {the Proposition \ref{decomp_f_g_b} and Corollary \ref{coromain}, taking $\alpha_{i}=2^{i}$ and $\varepsilon_i= \epsilon 2^{i}$,} i.e.  $\O^{i} \subseteq \O^{i+1} $ for i $\in \Z_{-}$ with the controls  $|g_{i}(x)|\leq c 2^{i}$,  $\|\rho_{i}\|_{\Lambda^{\gamma_{p}}}\leq \epsilon 2^{i}$ or $\|\rho_{i}\|_{bmo}\leq \epsilon 2^{i}$ when $p=1$, and
$$b_i^\# \displaystyle =\sum_{\substack{k \in \Z  \\ \ell_k\geq\delta_i}} \beta_{ik}, \, \text{where} \, \delta_i = \left( \epsilon 2^{i} \right)^{\frac{1}{1-r_p}}.$$

Given two consecutive values of $i$ and the corresponding decompositions $f=g_{i+1}+b_{i+1}^\# +\rho_{i+1}=g_{i}+b_{i}^\# +\rho_{i}$  
and consider the difference $b_{i}-b_{i+1}=g_{i+1}-g_{i}$.  We see that  
$$\|b_{i}-b_{i+1}\|_{L^\infty}\le \|g_{i+1}\|_{L^\infty}+\|g_{i}\|_{L^\infty} \lesssim 2^{i}.$$ 
{On the other hand, \eqref{a4_{m}} and the bounded intersection property of $\{Q^*_{i,k}\}$ imply that $\|D^j b_i\|_{L^\infty}\le C$, with $C$ independent of $i$, for all $j \in \{1,2,...,m\}$. Consequently,  $\|D^{N_p+1}(b_{i}-b_{i+1})\|_{L^\infty}\le C$. To estimate $D^{N_p} (b_{i}-b_{i+1})$, note that $\delta_i < \delta_{i+1}$, thus
$$b_{i}^\# = \sum_{\substack{k \in \Z  \\ \ell_k\geq\delta_i}} \beta_{ik} = \sum_{\substack{k \in \Z  \\ \ell_k\geq \delta_{i+1} }} \beta_{ik} + \sum_{\substack{k \in \Z  \\ \delta_{i} \leq \ell_k < \delta_{i+1} }} \beta_{ik} \; = \; b_{i+1}^\# + \sum_{\substack{k \in \Z  \\ \delta_{i} \leq \ell_k < \delta_{i+1} }} \beta_{ik} .$$
Thus, \eqref{a3_{m}} implies that 
\begin{align*}
	\norma{D^{N_p} (b_{i}-b_{i+1})}_{L^{\infty}} &\leq \norma{D^{N_p} (\rho_{i}-\rho_{i+1})}_{L^{\infty}} + \norma{D^{N_p} (b_{i}^\#-b_{i+1}^\#)}_{L^{\infty}}\\
	&\lesssim \delta_i^{m-N_p} + \norma{D^{N_p} \left( \sum_{\substack{k \in \Z  \\ \delta_{i} \leq \ell_k < \delta_{i+1} }} \beta_{ik} \right)}_{L^{\infty}}\\
	&\lesssim \delta_i^{m-N_p} + \delta_{i+1}^{m-N_p} \lesssim c_{r_p} \delta_i,
\end{align*}	
for all $i \leq 0$. Again, by Remark \ref{remarkcatarina}, we have
$$\|b_{i}-b_{i+1}\|_{\Lambda^{\gamma_{p}}} \leq \|b_{i}-b_{i+1}\|_{L^{\infty}} + \|D^{N_p+1} (b_{i}-b_{i+1})\|_{L^{\infty}}M_i^{1-r_p} + \|D^{N_p} (b_{i}-b_{i+1})\|_{L^{\infty}} M_i^{-r_p}, $$
for any $M_i>0$. In particular, we can choose $M_i=\delta_i$. Using the previous inequalities
$$\|b_{i}-b_{i+1}\|_{\Lambda^{\gamma_{p}}} \lesssim 2^i + \delta_i^{1-r_p} +\delta_i \delta_i^{-r_p} \simeq 2^i + \epsilon2^i,$$
for all $i \leq 0$.} Thus, we have that 
$$\|g_{i+1}-g_{i}\|_{\Lambda^{\gamma_{p}}}=\|b_{i}-b_{i+1}\|_{\Lambda^{\gamma_{p}}}\le c\,2^{i} ,$$
which implies that the sum $ \sum_{-\infty}^{L} (g_{i+1}-g_{i})$ converges to $0$ in $\Lambda^{\gamma_{p}}(\R^N)$ as $L\to-\infty$. {For the case $p=1$, $\|g_{i+1}-g_{i}\|_{bmo} \lesssim \|g_{i+1}-g_{i}\|_{L^\infty} \lesssim 2^i$, and the sum $ \sum_{-\infty}^{L} (g_{i+1}-g_{i})$ converges to $0$ in $bmo(\R^N)$ as $L\to-\infty$.}

Since $f$ is bounded, $\M f$ is also bounded and there is $N_0\in \Z$ such that $g_{i+1}\equiv g_{i}$ for $i\ge N_0$.
Therefore, we may write
	\begin{align*}
	f=\sum_{-\infty}^{L} (g_{i+1}-g_{i})+
	\sum_{L+1}^{N_0} (g_{i+1}-g_{i})
	&= \sum_{i=L+1}^{N_0} (b^\#_{i}-b^\#_{i+1})+\sum_{i=L+1}^{N_0} (\rho_{i}-\rho_{i+1})
	+\sum_{-\infty}^{L} (g_{i+1}-g_{i})\\
	&= \sum_{i=L+1}^{N_0} 
	\bigg(\sum_{k\in F_{i}}\beta_{i,k}-\sum_{l\in F_{i+1}}\beta_{i+1,l}\bigg)+\sum_{-\infty}^{L} (g_{i+1}-g_{i}) \\
	&:=f^\flat+R,
\end{align*}
where $\|R\|_{\Lambda^{\gamma_{p}}}$  (or $\|R\|_{bmo}$, for $p=1$)} can be made arbitrary small by taking $\epsilon$ small and $L$ sufficiently negative. The next step is to write $f^\flat$ as a finite sum 
\[
f^\flat= \sum_{i=L+1}^{N_0}  \bigg(\sum_{k\in F_{i}}\beta_{i,k}-\sum_{l\in F_{i+1}}\beta_{{i+1},l}\bigg)=\sum_{i=L+1}^{N_0} \sum_k \lambda_{i,k}\, a_{i,k}
\]
where $\left\{a_{i,k}\right\}_{i,k}$ are $H^p$-atoms and $\sum_{i,k} |\lambda_{i,k}|^p\le C\|f\|_{H^p}$.
It is convenient to redefine the functions $\beta_{i,k}$  by setting
$\beta_{i,k}\equiv0$ if $k\notin F_i$ for each fixed $i$, so the approximation 
$f^\flat$ of $f$ may now be written as
\begin{align}
	f^\flat&=\sum_{i=L+1}^{N_0}\bigg(\sum_k (f-P_{f,i,k})\eta_{i,k} - \sum_{\ell} (f-P_{f,i+1,\ell})\eta_{i+1,\ell}\bigg)
	=\sum_{i=L+1}^{N_0}\sum_k A_{i,k}\label{a4}
\end{align}
where
\[
A_{i,k}=(f-P_{f,i,k})\eta_{i,k}-\sum_\ell(f-P_{f,i+1,\ell})\eta_{i+1,\ell}\eta_{i,k}+\sum_{\ell} Q_{f,k,\ell}\,\eta_{i+1,\ell},
\]
with $Q_{f,k,\ell}(x)= \displaystyle P_{ \textcolor{red}{h}, i+1,\ell}$ and $h:=f-P_{f,i+1,\ell}$. It is understood that $P_{f,i,k}=0$ and $Q_{f,k,\ell}=0$ if $k\notin F_i$ or 
$\ell \notin F_{i+1}$. However, in order to prove that \eqref{a4} holds, one needs to observe that, by property (iii) of Lemma \ref{Lema4_M}, $Q_{f,k,\ell} = P_{f,i,k} - P_{f,i+1,\ell}$ and to assume that 
\begin{equation}\label{a5}
	\sum_{k\in F_i}\eta_{i,k}\equiv1, \text{\ on the support of $\eta_{\ell,k+1}$ whenever 
		$\ell\in F_{i+1}$.} 
\end{equation}
This can be achieved inductively: first add a finite number of extra indices to 
$F_{N_0}$ so that  \eqref{a5}  holds for $i=N_0$; then add finitely many additional indices to 
$F_{N_0-1}$  so that  \eqref{a5}  holds for $i=N_0-1$, and so on.
Note that the additional terms $\beta_{i,k}$ added to $f^\flat$ in this way do not increase the norm $\|R\|_{\Lambda^{\gamma_p}}$, which remains at least as small as in the original definition.

Let $B_{i,k}$ be the smallest ball containing $Q^*_{i,k}$.
The same arguments as on pages 108--109 of \cite{St} for the infinite decomposition now show, in the finite decomposition case, that setting
 $a_{i,k}=c^{-1}2^{-i}|B_{i,k}|^{-1/p}A_{i,k}$ and 
$\lambda_{i,k}=c2^i|B_{i,k}|^{1/p}$ we have 
$f^\flat=\sum_{i=L+1}^{N_0}\sum_k \lambda_{i,k}a_{i,k}$. Furthermore,
\[
\sum_{i,k}|\lambda_{i,k}|^p=c\sum_{i,k}2^{ip}|Q_{i,k}|
\le c'\int (\M f)^p(x)\,dx\le C\|f\|_{H^p}.
\]
For each $n=1,2\dots$, we may take $f_n$ be as the function $f^\flat$ in the above construction satisfying $\|R\|_{\Lambda^{\gamma_{p}}}<1/n$. Then, the sequence $\left\{f_n\right\}_{n}$ satisfies the required properties (i) and (ii). \qed

\section{Hardy-Sobolev spaces and elliptic differential operators}\label{S5}

Consider a linear differential operator $A(\cdot,D)$ of order $m \geq 1$ {with smooth complex coefficients, given by 
\begin{equation}\label{opa}
A(x,D)=\sum_{|\alpha|\leq m}a_{\alpha}(x)\partial^{\alpha}: C^{\infty}(\Omega,E) \rightarrow C^{\infty}(\Omega,F),
\end{equation}
where $a_\alpha(x) \in \L(E,F)$ is a linear map for each $x \in \Omega$. We denote by $A^*(x,D)=\sum_{|\alpha|\leq m}a^*_{\alpha}(x)\partial^{\alpha} : C^{\infty}(\Omega,F^*) \rightarrow C^{\infty}(\Omega,E^*)$, with $a^*_\alpha(x) \in \L(F^*,E^*)$ for each $x \in \Omega$, the adjoint operator of $A(\cdot,D)$. Its principal part is denoted by 
\begin{equation}\label{opb}
A_{\nu}(x,D)=\sum_{|\alpha|=m}a_{\alpha}(x)\partial^{\alpha}
\end{equation}
and when $A(\cdot, D) = A_{\nu}(\cdot,D)$, we say that $A$ is a homogeneous operator. 
Since the complex vector spaces $E$ and $F$ are finite-dimensional, for simplicity, we may identify
$E^*$ with $E$, and $F^*$ with $F$ when convenient. 
Naturally the inner products may be regarded as 
$$u \;\cdot_E \;v = \sum_{j=1}^{n_E} u_j\overline{v_j} \quad \text{ and } \quad w \; \cdot_F \;z = \sum_{j=1}^{n_F} w_j\overline{z_j},$$
for all $u=\left(u_1,...,u_{n_E}\right), v=\left(v_1,...,v_{n_E}\right) \in E$ and $w=\left(w_1,...,w_{n_F}\right), z=\left(z_1,...,z_{n_F}\right) \in F$, as well as the complex conjugation. To simplify notation, we omit the space subscripts in the inner products $\cdot_E$ and $\cdot_F$.}

Now suppose $A(\cdot,D)$ is elliptic operator, and consider the order $2m$ differential operator $\Delta_{A}:=A_{\nu}^{\ast}(\cdot,D)\circ A_{\nu}(\cdot,D)$, which may be regarded as an elliptic pseudodifferential operator with symbol in the H\"ormander class $S^{2m}_{1,0}(\Omega \times \R^N; E,E^*)$ (concerning pseudo-differential operators we
refer \cite[Chapter 3]{H}, \cite{Tay} {and \cite{TrPseud}} for more details). So, there exist properly supported pseudo-differential operators $q(\cdot,D) \in {OpS^{{-2m}}_{1,0}(\Omega; E^*,E)}$ {(called parametrix)} and $r(\cdot, D) \in {OpS^{-\infty}(\Omega; E)}$ (regularizing) such that
\begin{equation}\label{parametrix}
u(x)=q(x,D)\Delta_{A}u(x)+{r}(x,D)u(x), \quad \forall \,\, u \in C^{\infty}(\Omega,{E}) .
\end{equation}

{Let $N \geq 2$, {$0<p< N$} and $p^*=\frac{pN}{N-p}$. Theorem 3.1 in \cite{HHP} asserts that for any ball $B \subset \R^{N}$ there exists $C=C(B,p,N)>0$ such that 
\begin{equation}\label{SGNhp}
\|u\|_{h^{p^{*}}}\leq C \|\nabla u\|_{h^{p}}, \quad \forall \, u \in C_{c}^{\infty}(B).
\end{equation} 
Next we present an extension of the previous inequality for higher-order derivatives

\begin{proposition}\label{prop.gradmod}
Assume $N \geq 2$, {$0 <  p  <N$}, and let $A_{\nu}(\cdot,D)$ be an elliptic operators as \eqref{opb}. Then for every point $x_{0} \in \Omega$, there exist an open ball $B=B(x_{0},\ell) \subset \Omega$ and a constant $C=C(B,p,N)>0$ such that 
\begin{equation}\label{jh}
\| u \|_{{\dot{h}^{m,p}}} \leq C\| A_{\nu}(\cdot,D) u \|_{h^{p}}, \quad \forall \, u \in C_{c}^{\infty}(B,E).
\end{equation}
\end{proposition}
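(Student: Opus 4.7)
The plan is to differentiate the parametrix identity \eqref{parametrix}. Writing $\Delta_{A} = A_{\nu}^{*} \circ A_{\nu}$, for each multi-index $\beta$ with $|\beta|=m$ one has
$$\partial^{\beta} u \;=\; T_{\beta}(A_{\nu} u) \;+\; R_{\beta} u,$$
where $T_{\beta} := \partial^{\beta} q(x,D)\, A_{\nu}^{*}(x,D)$ is a properly supported pseudodifferential operator of order $0$ (since $q \in OpS^{-2m}$ and $A_{\nu}^{*}$ has order $m$), and $R_{\beta} := \partial^{\beta} r(x,D) \in OpS^{-\infty}(\Omega)$ is properly supported as well. This decomposes $D^{m} u$ as an order-zero operator acting on $A_{\nu} u$ (the main term) plus a smoothing remainder acting on $u$ (which we shall absorb).

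Fix concentric balls $B := B(x_0,\ell) \subset B' := B(x_0,2\ell) \subset \Omega$, with $\ell>0$ to be chosen. By the $h^{p}$-boundedness of properly supported order-zero pseudodifferential operators with smooth coefficients (see, e.g., \cite{G}, \cite{Tay}), applied to functions supported in $\overline{B'}$,
$$\sum_{|\beta|=m}\|T_{\beta}(A_{\nu} u)\|_{h^{p}(\R^{N})} \;\le\; C_{1}\, \|A_{\nu} u\|_{h^{p}(\R^{N})}$$
for every $u \in C_{c}^{\infty}(B,E)$, with $C_{1}$ depending on $B'$ and $p$ but not on $\ell$. For the remainder, the Schwartz kernel of $R_{\beta}$ is $C^{\infty}$ on $\overline{B'}\times \overline{B'}$, hence $R_{\beta}u$ is smooth and supported in $\overline{B'}$, giving
$$\|R_{\beta} u\|_{h^{p}(\R^{N})} \;\le\; C_{2}\, \|R_{\beta} u\|_{L^{\infty}} \;\le\; C_{3}\, \|u\|_{L^{1}(B)},$$
with $C_{2}, C_{3}$ independent of $\ell$.

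The remaining task is to absorb $\|u\|_{L^{1}(B)}$ by a small multiple of $\|D^{m} u\|_{h^{p}}$. For $p\ge 1$ the classical Poincar\'e inequality iterated $m$ times, combined with H\"older and the inclusion $h^{1} \subset L^{1}$ (or $L^{p}=h^{p}$ for $p>1$), produces
$$\|u\|_{L^{1}(B)} \;\le\; C\, \ell^{\,m+N(1-1/p)}\, \|D^{m} u\|_{h^{p}}.$$
For $0<p<1$ one iterates the Hardy--Sobolev embedding \eqref{SGNhp} as many times as admissible (each step raising the exponent by the Sobolev rule $1/q=1/p-1/N$) to reach $\|u\|_{h^{q}}\le C\|D^{m} u\|_{h^{p}}$ for some $q\ge 1$ with $1/q = 1/p - m/N$, and then $\|u\|_{L^{1}(B)}\le C|B|^{1-1/q}\|u\|_{h^{q}}$, yielding again a positive power of $\ell$ in front of $\|D^{m} u\|_{h^{p}}$ (at least in the range $p>N/(N+m)$ used for Theorem A).

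Putting everything together, $\sum_{|\beta|=m}\|R_{\beta} u\|_{h^{p}} \le C_{4}\, \ell^{\alpha}\|D^{m} u\|_{h^{p}}$ for some $\alpha>0$ and $C_{4}$ independent of $\ell$. Choosing $\ell$ so small that $C_{4}\ell^{\alpha}\le 1/2$ and rearranging yields
$$\|D^{m} u\|_{h^{p}} \;\le\; 2 C_{1} \|A_{\nu} u\|_{h^{p}},$$
which is \eqref{jh}. The main technical subtlety is the regime $0<p\le 1$, where $h^{p}$ is not contained in $L^{p}$ and the classical Poincar\'e inequality cannot be applied directly; the Hardy--Sobolev embedding \eqref{SGNhp} (itself produced via a parametrix argument in \cite{HHP}) is precisely the tool introduced beforehand to close the absorption step in that range.
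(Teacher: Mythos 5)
Your proof follows the same blueprint as the paper's: differentiate the parametrix identity \eqref{parametrix}, obtaining $\partial^\beta u = (\partial^\beta q A_\nu^*)(A_\nu u) + (\partial^\beta r)u$ with a zero-order main term and a smoothing remainder, bound the main term by $h^p$-boundedness of order-zero pseudodifferential operators, and absorb the remainder by shrinking the ball $B$. The structural idea is identical.

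Where you differ is in the treatment of the remainder $R_\beta u = (\partial^\beta r)u$. The paper stays entirely inside the $h^p$ scale: $\|\tilde r(\cdot,D)u\|_{h^p}\lesssim \|u\|_{h^p}$ by pseudodifferential boundedness, and then $\|u\|_{h^p}\le C|B|^{m/N}\|u\|_{\dot h^{m,p}}$ (inequality \eqref{sobolevm}, which the paper derives by bootstrapping H\"older together with the Hardy--Sobolev embedding \eqref{SGNhp} at fixed exponent $p$). You instead exploit the smooth Schwartz kernel of $R_\beta$ more directly: $\|R_\beta u\|_{h^p}\lesssim\|R_\beta u\|_{L^\infty}\lesssim\|u\|_{L^1(B)}$ (the first step is justified because $R_\beta u$ is smooth, compactly supported in a set of bounded measure, hence a multiple of a rough $h^p$-atom), and then you absorb $\|u\|_{L^1(B)}$ via classical Poincar\'e for $p\ge 1$ and via $m$ iterations of the Hardy--Sobolev embedding \eqref{SGNhp} for $p<1$. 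Both detours are correct, and arguably the $L^\infty/L^1$ route is a little more transparent since it avoids asserting a full Poincar\'e inequality in $h^p$ for functions supported in small balls.

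The one substantive gap you flag yourself: your absorption for $0<p<1$ needs $q\ge 1$ with $1/q=1/p-m/N$, i.e. $p\ge N/(N+m)$, so as written your argument does not reach the full range $0<p<N$ claimed in the proposition. You correctly note that the range you do cover suffices for Theorem A, where the hypothesis is precisely $N/(N+m)<p\le N/m$. For a fully faithful reconstruction you would need to replicate (or justify) the paper's estimate \eqref{sobolevm} -- which the paper asserts via ``bootstrapping'' without detailing the range -- or to carry out the absorption by a Taylor/moment argument that does not pass through an embedding into $L^1$. Beyond that, one small point of care: $R_\beta u$ is not literally supported in $\overline{B'}$ for a properly supported $R_\beta$; it is supported in a fixed compact set depending on $\Omega$ but not on $\ell$, which is enough for your estimate, but the phrasing should be adjusted.
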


\begin{proof} Fix $x_{0} \in \Omega$ and choose $\ell>0$ such that $B=B(x_{0},\ell) \subset \Omega$.
It follows from H\"older's inequality and the Sobolev-Gagliardo-Nirenberg inequality \eqref{SGNhp} in $h^p(B)$ that
\begin{equation}\nonumber
\int_{B}|m_{\varphi}u(x)|^{p}dx \leq |B|^{1-p/p^{*}} \left( \int_{B}|m_{\varphi}u(x)|^{p^{*}}dx \right)^{p/p^{*}} \leq C |B|^{p/N} \|\nabla u\|^{p}_{h^p},
\end{equation}
for all $u \in C_{c}^{\infty}(B,E)$. Bootstrapping the previous argument, we conclude that
\begin{equation}\label{sobolevm}
\| \partial^{\alpha}u \|_{h^p}  \leq C |B|^{(m-|\alpha|)/N} \|u\|_{\dot{h}^{m,p}}, \quad \forall \, u \in C_{c}^{\infty}(B,E)
\end{equation}
for all $|\alpha| \leq m$. Now, for $|\beta| \leq m$, the identity \eqref{parametrix} yields
$$\partial^{\beta}u (x)=\tilde{q}(x,D)[A_{\nu}(x,D)u(x)]+\tilde{r}(x,D)u(x),$$
where $\tilde{q}(x,D):=\partial^{\beta}q(x,D)A_{\nu}^{*}(x,D) \in OpS^{-(m-|\beta|)}_{1,0}(B)$ and $\tilde{r}(x,D):=\partial^{\beta}r(x,D)\in OpS^{-\infty}(B)$. Thus
\begin{align*}
\|\partial^{\beta} u \|_{h^p} \leq \| \tilde{q}(\cdot,D)A_{\nu}(\cdot,D) u \|_{h^p}+ \| \tilde{r}(\cdot,D)u\|_{h^p}
\lesssim \| A_{\nu}(\cdot,D)u \|_{h^p}+\|u \|_{h^p},
\end{align*}
since $\tilde{q}(\cdot,D),\tilde{r}(\cdot,D)$ are bounded operators from $h^{p}$ to itself for {$0<p< \infty$}. 
Using \eqref{sobolevm} and shrinking the radius of $B$ to absorb the second term on the right-hand side, we conclude
the desired estimate.  \qed

\end{proof}	

\begin{remark}\label{obs1}
Let $A(\cdot,D)$ be a linear differential operator as \eqref{opa}, and suppose that all coefficients $a_\alpha(x)$
are bounded at some neighborhood of $x_{0} \in \Omega$, namely $B=B(x_{0}, \ell) \subset\Omega$, and consider $\displaystyle{C:= \sum_{|\alpha|<m}\|a_{\alpha}\|_{L^{\infty}(B)}}$. Assuming {$0 < p <N$,} we have from \eqref{sobolevm}
the following 
\begin{align*}
\|A_{\nu}(\cdot,D)u\|_{h^p} & \leq \|A(\cdot,D)u\|_{h^p}+C\sum_{|\alpha|<m} \| \partial^{\alpha}u\|_{h^p} \\
& \lesssim \|A(\cdot,D)u\|_{h^p}+\sum_{|\alpha|<m}|B|^{(m-|\alpha|)/N} \|u\|_{{\dot{h}^{m,p}}}.  \\
& \lesssim \|A(\cdot,D)u\|_{h^p}+|B|^{1/N} \|u\|_{{\dot{h}^{m,p}}}. 
\end{align*}
Using \eqref{jh} and absorbing the constant by decreasing the radius if necessary, we obtain
\begin{equation}\label{4.8}
\|A_{\nu}(\cdot,D)u\|_{h^p} \leq C  \|A(\cdot,D)u\|_{h^p},  \quad \forall \, u\in C_{c}^{\infty}(B,E),
\end{equation}
for $0< p <N$ and $C=C(B,p)>0$. In particular, taking $A(\cdot,D)=(D^{m})_{|\alpha|\leq m}$ (the non-homogeneous  total derivative operator with order m), this inequality shows that the {quasi-norms} in $\dot{h}^{m,p}(B)$ and $h^{m,p}(B)$ are locally equivalents.  
\end{remark}

\section{The Proof of Theorem A}\label{S6}

Let $\psi\in C_c^\infty(B(0,1))$ with $\int\psi=1$ and $\psi_t(x)\;:=\;t^{-N}\psi(x/t)$, $0<t<1$. 
For simplicity, we take $x_{0}=0$ and $U=B(0,\rho)$  with radius $\rho<1/2$ to be chosen later. Let {$P: \Omega \mapsto E$} be a polynomial of degree at most $m-1$ such that $A(\cdot,D)P= 0$ on $U$; this will be adjusted later. {In fact, by the homogeneity hypothesis, $A(\cdot,D)P= 0$ for all $P \in \P_{m-1}^E$.} Then we may write
\begin{equation}\nonumber
	{\displaystyle{\overline{\psi_t * \left[ A(x,D)\phi \; \cdot \; v \right](x)}}} = \int_{B(x,t)} {A^*(y,D)\left[\psi_{t}\left(x-y \right){v}(y) \right] }\; {\cdot} \; (\phi - P)(y) dy.
\end{equation}
Applying the product rule under the assumption $A^*(\cdot, D)v=0$, and using the 
uniform local boundedness of the functions $\partial^\theta a_\alpha^*$ and $\partial^\theta \psi$ for $0\leq|\theta| \leq m$, with $0<t\leq 1$, we may estimate 
\begin{equation} \nonumber
\left|\psi_t \ast  A(\cdot,D) \phi \cdot v\right|(x)\lesssim \frac{C(\psi)}{t^{N+m}} \int_{B_{x}^{t}} \left( \sum_{0 \leq |\gamma| < m} |\partial^{\gamma}v (y)| \right) |\phi(y) - P(y)| \,dy,
\end{equation}
where $\displaystyle{C(\psi)=\max_{\substack{|\alpha|\leq m\\0\leq \gamma, \theta \leq \alpha}} \sup_{z \in \R^{N}}\left\{\left( 1 + \norma{\partial^{\gamma} a_{\alpha}^*}_{L^{\infty}} \right) |\partial^{\theta}\psi|(z)\right\} }$.

Now, {let $1<\beta<q$ and $\beta'$ be the conjugate exponent of $\beta$, satisfying
 $1<\beta' <p^{*}:=Np/(N-mp)$ if $\frac{N}{N+m}<p< \min \left\{ 1, \frac{N}{m} \right\}$ or $1<\beta' <p$  if $1<p \leq N/m$ for $m<N$. Thus}
\begin{align*}
\left|\psi_t \ast A(\cdot,D) \phi\cdot v\right|(x)\lesssim 
\left[ \fint_{B_{x}^{t}} \left( \sum_{0\leq|\gamma| < m} \left| \partial^{\gamma}v(y) \right| \right)^{\beta}dy \right]^{{1}/{\beta}} \left[ \fint_{B_{x}^{t}} \left|\frac{1}{t^{m}}(\phi(y) - P(y))\right|^{\beta'}  dy \right]^{{1}/{\beta'}}
\end{align*}
which implies that $\displaystyle{\sup_{0<t<1}\left|\psi_t \ast A(\cdot,D) \phi\cdot v\right|^{r}(x)}$ is bounded by
\begin{equation*}
	C(\psi) \left[ M\left(\left( \sum_{0\leq|\gamma| < m} \left| \partial^{\gamma}v \right| \right)^{\beta}\right)(x) \right]^{{r}/{\beta}} \left[ \sup_{0<t<1} \fint_{B_{x}^{t}} \left|\frac{1}{t^{m}}\left[\phi(y) - P(y)\right]\right|^{\beta'}  dy \right]^{{r}/{\beta'}},
\end{equation*}
where $M$ denotes the Hardy-Littlewood maximal operator. Integrating on $\R^{N}$ and using H\"older's inequality with exponents $qr^{-1}$ and $pr^{-1}$ (since $1/r=1/p+1/q$), we obtain the estimate 
\begin{align}\label{mainestimate}
	\norma{A(\cdot,D)\phi \cdot v}^{r}_{h^r}
	\displaystyle{ \lesssim  \|M \left( D^{m}_{<}v \right) \|^{{r}/{\beta q}}_{L^{{q}/{\beta}}}
	\Bigg(\int_{\R^{N}}\left[ \sup_{0<t<1} \fint_{B_{x}^{t}} \left|\frac{1}{t^{m}}\left[\phi(y) - P(y)\right]\right|^{\beta'}  dy \right]^{{p}/{\beta'}} dx\Bigg)^{{r}/{p}}}
\end{align}
where $\displaystyle{ D^{m}_{<}v=\left( \sum_{0\leq|\gamma| < m} \left| \partial^{\gamma}v \right| \right)^{\beta}}$. 
Since the operator $M$ is bounded on $L^{{q}/{\beta}}$ for $1<\beta<q$, 
the first factor on the right-hand side can be estimated by
\begin{eqnarray}\label{ma}
	\|M \left( D^{m}_{<}v \right) \|^{{r}/{\beta q}}_{L^{{q}/{\beta}}}\leq
	 \norma{ \left( \sum_{0\leq|\gamma| < m} \left| \partial^{\gamma}v \right| \right)^{\beta} }_{L^{{q}/{\beta}}}^{{1}/{\beta}}
	=\norma{ \sum_{0\leq|\gamma| < m} \left| \partial^{\gamma}v \right| }_{L^{q}} \lesssim  \|v\|_{W^{m-1,q}}. .
\end{eqnarray}
In order to conclude the proof of Theorem A, we need to estimate 
the remaining factor on the right-hand side of (6.1). This follows from a fundamental inequality for local Hardy-Sobolev spaces, and is an ersatz for  \cite[Lemma II.2]{CLMS}:
	
\begin{lemma}\label{lemanovo}
Let $m \in \N$, $\frac{N}{N+m}< p \leq { \min \left\{ 1, \frac{N}{m} \right\} }$, and $\alpha \in [1,p^{*})$ with $p^*=Np/(N-mp)$ (in particular, if $p = \frac{N}{m}$ we take $p^*=\infty$). Then there exist a constant $C >0$ and a polynomial $P_{x,t,f} \in \P_{m-1}$ such that the estimate 
\begin{align}\label{5.2}
\int_{\R^{N}} \bigg[ \sup_{0<t<1}\fint_{B_{x}^{t}} \bigg\{\frac{1}{t^m} |f(y)- P_{x,t,f} (y)| \bigg\}^{\alpha}dy \bigg]^{{p}/{\alpha}}dx \leq C \|f\|_{h^{m,p}}^p
\end{align}
holds for every $f\in C_{c}^{\infty}(\R^{N})$. {Moreover, the same conclusion holds when {$1<p<\infty$} for $m<N$ and $\alpha \in [1,p)$.}
\end{lemma}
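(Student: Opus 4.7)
The plan is to split into two regimes and reduce everything to the pointwise Poincar\'e estimate \eqref{Desigualdade_Poincarè_M_maior} from Section \ref{S3}. Define the sublinear operator
\[
S f(x) \; := \; \sup_{0<t<1}\Bigl(\fint_{B_{x}^{t}}\bigl\{t^{-m}|f(y)-P_{x,t,f}(y)|\bigr\}^{\alpha}\,dy\Bigr)^{1/\alpha},
\]
so that \eqref{5.2} is just $\|Sf\|_{L^{p}}^{p}\leq C\|f\|_{h^{m,p}}^{p}$. Because the polynomial $P_{x,t,f}$ depends linearly on $f$ (see \eqref{def_phi_Q} and the construction leading to \eqref{eq_M_adapt}), the operator $S$ is sublinear, and for $p\leq 1$ the inequality $|a+b|^{p}\leq|a|^{p}+|b|^{p}$ yields the decomposition bound $\|S(\sum_{j}\lambda_{j}a_{j})\|_{L^{p}}^{p}\leq\sum_{j}|\lambda_{j}|^{p}\|Sa_{j}\|_{L^{p}}^{p}$.

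For the easy regime $1<p<\infty$ with $m<N$ (the \emph{moreover} part), apply \eqref{Desigualdade_Poincarè_M_maior} with $\gamma\in(\alpha,p)$ if $\alpha>1$ (or $\gamma\in(1,p)$ if $\alpha=1$), which gives the pointwise control
\[
S f(x) \;\leq\; C\,M(|D^{m}f|^{\gamma})(x)^{1/\gamma}.
\]
Since $p/\gamma>1$, the Hardy--Littlewood maximal operator is bounded in $L^{p/\gamma}$, so raising to the power $p$ and integrating yields $\|Sf\|_{L^{p}}^{p}\lesssim\|D^{m}f\|_{L^{p}}^{p}\simeq\|D^{m}f\|_{h^{p}}^{p}\leq\|f\|_{h^{m,p}}^{p}$.

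For the main regime $\tfrac{N}{N+m}<p\leq\min\{1,N/m\}$, the maximal operator is not bounded at the relevant exponent and we must argue atomically. The plan is to combine the smooth $h^{p}$ atomic decomposition of Proposition \ref{czespecial}, applied to the top derivatives $D^{\beta}f$ with $|\beta|=m$, with the representation formula \eqref{K} of the form $f=\sum_{|\beta|=m}K_{\beta}\ast D^{\beta}f$, in order to write $f=\sum_{j}\lambda_{j}A_{j}$ where each $A_{j}\in C_{c}^{m}$ is a smooth $(m,p)$-Hardy--Sobolev atom supported in a ball $B_{j}=B(x_{j},r_{j})$ satisfying $\|D^{\beta}A_{j}\|_{L^{\infty}}\leq|B_{j}|^{-1/p+(m-|\beta|)/N}$ for $|\beta|\leq m$ and appropriate moment conditions when $r_{j}<1$, with $\sum_{j}|\lambda_{j}|^{p}\leq C\|f\|_{h^{m,p}}^{p}$. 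By subadditivity it then suffices to establish the uniform bound $\|SA\|_{L^{p}}^{p}\leq C$ for any such atom $A$ supported in $B=B(x_{0},r)$.

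The atom estimate I would split into the near region $2B$ and the far region $(2B)^{c}$. On $2B$ use H\"older, the pointwise Poincar\'e estimate \eqref{Desigualdade_Poincarè_M_maior}, and the Sobolev--Hardy embedding \eqref{SGNhp} from Section \ref{S5} to trade the $L^{\alpha}$-average of $t^{-m}(A-P_{x,t,A})$ against $\|D^{m}A\|_{L^{p}}\lesssim|B|^{1/p-1/p}=1$, which is where the constraint $\alpha<p^{\ast}=Np/(N-mp)$ enters. On $(2B)^{c}$, for $x$ at distance $d\gtrsim r$ only scales $t\gtrsim d$ contribute, and one uses the moment conditions of $A$ together with the cancellation built into the projection $P_{x,t,A}$ to extract decay $|x-x_{0}|^{-(N+m+N_{p})}$, which is integrable to the power $p$ precisely because $p>N/(N+m)$ ensures $p(N+m+N_{p})>N$. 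The case $r\geq 1$ is easier since only the local region matters. The main obstacle will be this off-support analysis: matching the cancellation degree $m-1$ of $P_{x,t,f}$ with the moment order $N_{p}$ of the atom and obtaining uniform decay independent of the atom, while simultaneously respecting the restriction $\alpha<p^{\ast}$ on the near region.
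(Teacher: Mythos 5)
Your high-level skeleton is compatible with the paper's argument: define a sublinear maximal operator, dispatch the $p>1$ regime by the generalized Poincar\'e inequality \eqref{Desigualdade_Poincarè_M_maior} and the $L^{p/\gamma}$-boundedness of the Hardy--Littlewood maximal function (that part of your proposal is essentially the paper's proof and is fine), and for $p\leq1$ reduce to a uniform bound on atoms with a near/far split. But there are two genuine gaps in the $p\le 1$ regime, and both are precisely the points the paper works hardest to resolve.

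First, you propose an atomic decomposition $f=\sum_j\lambda_j A_j$ into compactly supported ``$(m,p)$-Hardy--Sobolev atoms'' obtained by applying Proposition~\ref{czespecial} to the top derivatives $D^\beta f$, $|\beta|=m$, and then inverting via the representation $f=\sum_{|\beta|=m}K_\beta\ast D^\beta f$. This does not deliver what you claim: the Riesz kernels $K_\beta(x)=C_{\beta,N}x^\beta/|x|^N$ have slow algebraic decay, so $K_\beta\ast a$ is never compactly supported for an $h^p$-atom $a$, and there is no reason the pieces reassemble into atoms with the simultaneous size bounds $\|D^\gamma A_j\|_{L^\infty}\lesssim|B_j|^{-1/p+(m-|\gamma|)/N}$ for all $|\gamma|\le m$ supported in a common ball. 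Proposition~\ref{czespecial} is an \emph{approximation} statement (it produces a sequence $f_n$ with $\|f_n\|_{h^p}\lesssim\|f\|_{h^p}$ and $f_n\to f$ in $\Lambda^{\gamma_p}$), not an $h^{m,p}$ atomic decomposition. The paper sidesteps all of this by composing with the Bessel potential $J_m=(1-\Delta)^{-m/2}$: set $T'=T\circ J_m$, write $f=J_m g$ so that $\|f\|_{h^{m,p}}\simeq\|g\|_{h^p}$, decompose $g$ into \emph{standard} $h^p$-atoms (which is available), and control $T'a=T(J_m a)$ for a standard atom $a$ using the rapid off-diagonal decay \eqref{k1}--\eqref{k2} of the Bessel kernel --- not compact support. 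Your proposed decomposition would have to be constructed from scratch, and the construction you sketch does not work.

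Second, and more fundamental: you pass from ``uniform bound on atoms'' to the full estimate by the $p$-subadditivity $\|S(\sum\lambda_j a_j)\|_{L^p}^p\le\sum|\lambda_j|^p\|Sa_j\|_{L^p}^p$. The paper explicitly flags (citing Bownik, \cite{B}) that this implication is \emph{false} in general for infinite atomic sums and sublinear operators: a uniform atom bound does not automatically extend to $h^p$. This is the whole reason Proposition~\ref{czespecial} and Lemma~\ref{6.2} exist. The paper's mechanism is to exhibit $f_n\in C_c^m$ with a \emph{finite} smooth atomic decomposition, $\|f_n\|_{h^p}\lesssim\|f\|_{h^p}$, and $T'(f_n-f)\to 0$ uniformly on compacts (because $T'$ is controlled pointwise by $\|\cdot\|_{\Lambda^{\gamma_p}}$, which is where the duality $h^p\simeq(\Lambda^{\gamma_p})'$ and the approximation in $\Lambda^{\gamma_p}$-norm enter), and then take limits. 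Your proposal skips this limiting step entirely. Without it the argument does not close, even if you had the atomic decomposition from the first point. There are also small local issues (e.g.\ the intended role of \eqref{SGNhp} on the near region and the exponent $|B|^{1/p-1/p}$ read as $1$ are not coherent; the paper instead interpolates through $L^\beta$ for some $\beta>\alpha$ using $L^\beta$-boundedness of $D^m\circ J_m$), but the two items above are the structural gaps.
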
  
Note that 
\begin{align*}
\int_{\R^{N}}\left[ \sup_{0<t<1} \fint_{B_{x}^{t}} \left|\frac{1}{t^{m}}\left[\phi(y) - P(y)\right]\right|^{\beta'}  dy \right]^{{p}/{\beta'}} dx \lesssim \sum_{j=1}^{n_E} \int_{\R^{N}}\left[  \sup_{0<t<1} \fint_{B_{x}^{t}} \left|\frac{1}{t^{m}}\left[\phi_j(y) - P_j(y)\right]\right|^{\beta'}  dy \right]^{{p}/{\beta'}} dx.
\end{align*}
Applying Lemma \ref{lemanovo} to each component in the sum above, with $P_j:=P_{x,t,\phi_j}$, and using Proposition \ref{prop.gradmod} and Remark \ref{obs1}, we obtain a ball $B_{0}^{\rho}=B(0,\rho)$ (for $\rho$ sufficiently small) and $C=C(B,p,N)>0$ such that 
\begin{equation}\label{eq3.9}
\left(\int_{\R^{N}}\left[ \sup_{0<t<1}\fint_{B_{x}^{t}} 
\left\{\frac{1}{t^m}\left|{\phi}(y)-P_{x,t,{\phi}} (y)\right|\right\}^{\alpha}   dy \right]^{{ p}/{\alpha}}dx\right)^{{1}/{p}} \leq C(B_{0}^{\rho}) \|A(\cdot, D){\phi}\|_{h^{p}},
\end{equation}
for all $\phi \in C_{c}^{\infty}(B_{0}^{\rho})$. Plugging the inequalities \eqref{eq3.9} and \eqref{ma} into \eqref{mainestimate} we conclude the proof of Theorem A.  
\begin{flushright}
	\qed
\end{flushright}

The remainder of this section is devoted to proving Lemma \ref{lemanovo}. We point out that the second part of the result is trivially satisfied. 
Let $1<p<\infty$,  $m<N$, and $\alpha \in [1,p)$. From the generalized Poincar\'e inequality  \eqref{Desigualdade_Poincarè_M_maior} for $T=1$, choosing $\alpha<\gamma<p$ (in fact, we may take $\gamma=\alpha$ if $\alpha>1$), there exist a constant $C=C_{N,p, m, \alpha} >0$ and a polynomial $P_{x,t,f} \in \P_{m-1}$ such that
	$$\left( \fint_{B(x,t)} \left| \frac{1}{t^m}\left[f(y) - P_{x,t,f}(y)\right] \right|^{\alpha} dy \right)^{{1}/{\alpha}} \leq C \left( \fint_{B(x,t)} \left| D^m f(y) \right|^{\gamma} dy \right)^{{1}/{\gamma}}$$
for all  $f\in \ccinf(\R^{N})$. 
Now, taking the supremum for all $0<t<1$ we get 
	\begin{align*}
		\int_{\R^{N}} \bigg[ \sup_{0<t<1}\fint_{B_{x}^{t}} \bigg\{\frac{1}{t^m} |f(y)- P_{x,t,f} (y)| \bigg\}^{\alpha}dy \bigg]^{{p}/{\alpha}}dx 
		&\lesssim \norma{\left[ M\left( |D^m f|^\gamma \right) \right]^{{1}/{\gamma}} }_{L^p(\R^N)}^p \\
		&= \norma{ M\left( |D^m f|^\gamma \right)}_{L^{{p}/{\gamma}}(\R^N)}^{{p}/{\gamma}}\\
		& \lesssim  \norma{ |D^m f|^\gamma }_{L^{{p}/{\gamma}}(\R^N)}^{{p}/{\gamma}} \\
		&= \norma{ D^m f }_{L^{p}(\R^N)}^{p} 
	\end{align*}
for all  $f\in \ccinf(\R^{N})$, where in the last inequality uses the boundedness of maximal Hardy-Littlewood operator in $L^{p/\gamma}$ since $p>\gamma$. 
Inequality \eqref{5.2} then follows, recalling
that $h^{p}(\R^{N})$ is equivalent to $L^{p}(\R^{N})$ with comparable norms when $1<p<\infty$. 

\begin{remark}
{In the particular case $A(\cdot,D)=D^{m}$ (the total derivative operator) with $m<N$, which is elliptic, the upper bound in the Theorem A coincides with the obtained in \cite{CLMS}, namely $\frac{N}{N+m}<p<\infty$.} 
\end{remark}

\begin{proof} 
{Let $\frac{N}{N+m}< p \leq { \min \left\{ 1, \frac{N}{m} \right\} }$ and $\alpha \in [1,p^{*})$.}  
For each $f\in C_{c}^{\infty}(\R^{N})$, $x \in \R^{N}$, and $0<t<1$, consider  $P_{x,t,f}(y) := \; \inner{f}{ \phi_{Q(x,2t)}(y,\cdot)}$ the polynomial of degree $m-1$ given by the Lemma \ref{Lema4_M}, and define the sublinear operator
$$Tf(x) = \sup_{0<t<1}  \left\{ \fint_{B(x,t)} \left| \frac{1}{t^m}\left[f(y) - P_{x,t,f}(y)\right] \right|^{\alpha} dy \right\}^{{1}/{\alpha}}.$$
Rewriting the inequality \eqref{5.2}, it is sufficient to prove that 
\begin{equation}\label{5.3a}
\int_{\R^{N}}\left|Tf(x)\right|^{p}dx
 \leq C \|f\|_{h^{m,p}}^{p},
\quad \forall \, f\in \ccinf(\R^{N}).
\end{equation}

Consider the Bessel potential operator $J_{m}:=(1-\Delta)^{-m/2}$ for $m \in \N$, which can be understood as a pseudo-differential operator with symbol $\left(1+|\xi|^{2}\right)^{-m/2} \in S^{-m}(\R^N)$. Denote the sublinear operator $T' \,:= \,T \circ J_{m}$. Writing $f= J_{m}g$ and $D^m f=(D^m \circ J_{m})g$, in which
$$
\widehat{g}(\xi) := (1+|\xi|^2)^{\frac{m}{2}} \hat f(\xi) = \frac{1}{(1+|\xi|^2)^{\frac{m}{2}}}\hat f(\xi) + \sum_{j=1}^{m}  \binom{m}{j} \dfrac{|\xi|^{2j}\hat{f}(\xi)}{(1+|\xi|^2)^{\frac{m}{2}}}, 
$$
we see that  $ \sum_{j=0}^{m}\norma{D^j f}_{h^p} \simeq\|g\|_{h^{p}}$, because pseudodifferential operators with negative order are bounded from $h^{p}(\R^N)$ to itself for $0<p\leq 1$.
Thus, to obtain  \eqref{5.3a} it suffices to prove that there exists an uniform constant  $C>0$ such that
for every compact subset $K\subset\subset\erre^N$, the control
{\begin{equation}\label{5.3}
\int_{K}\left|T'g(x)\right|^{p}dx
 \leq C \|g\|_{h^{p}}^p,
\quad \forall \, g \in \ccinf(\R^{N})
\end{equation}  
holds. Indeed, assume \eqref{5.3} holds for all $K \subset \subset \R^N$, and set $h(x) := \left|T'g(x)\right|^{p}$ and $h_n(x) := h(x) \mathcal{X}_{B[0,n]}(x)$ for each $n \in \N$. Then, by Fatou Lemma, we have 
\begin{equation*}
	\int_{\R^N}\left|T'g(x)\right|^{p}dx=\int_{\R^N}h(x)dx \leq \limsup_{n} \int_{\R^N}h_{n}(x)dx    \leq C \|g\|_{h^{p}}^p
\end{equation*}  
and hence
	\begin{eqnarray*}
		\int_{\R^N}\left|Tf(x)\right|^{p}dx = \int_{\R^N}\left| T' g(x)\right|^{p}dx \leq
	  C \|g\|_{h^{p}}^p \lesssim \|f\|^{p}_{h^{m,p}}
	\end{eqnarray*}
which proves \eqref{5.3a}.}

The proof of \eqref{5.3} proceeds in two fundamental steps. The first is the reduction of the estimate to the case where $g$ is a standard bounded  $h^p$-atom, (see Subsection \ref{atoms}), i.e. we show that there exists a uniform constant $C>0$ such that 
\begin{equation}\label{atm}
\int_{K}\left|T'a(x)\right|^{p}dx
 \leq C 
\quad \forall \, a \, h^{p}-\text{atom}.
\end{equation}
This point is crucial, because there exist examples of linear operators that are uniformly bounded on standard atoms but cannot be extended to Hardy spaces (see \cite{B}). In the same spirit as \cite[Appendix A1]{HHP}, we state the following result: 
\end{proof}

\begin{lemma}\label{6.2}
Given $f\in\ccinf(\erre^N)$, there exist a sequence of functions $f_n\in \left(C^{m}_{c} \cap h^{p}\right)(\R^N)$ and a uniform constant $C>0$  such that
\begin{enumerate}
        \item [(i)] $\|f_n\|_{h^p}\le C \|f\|_{h^p}$;
	\item [(ii)] $\displaystyle \lim_{n\to\infty} \norma{T'(f_n-f)}_{L^p(K)} = 0,$ {for all $K \subset \subset \R^N$};
	\item [(iii)] $\norma{T'f_n}_{L^p(\R^N)} \le C \norma{f}_{h^p}$.
\end{enumerate}
\end{lemma}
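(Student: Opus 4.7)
My starting point is that $f\in C_c^\infty(\mathbb{R}^N)\subset (C_c^m\cap h^p)(\mathbb{R}^N)$, so Proposition \ref{czespecial} applies directly and produces a sequence $f_n\in C_c^m(\mathbb{R}^N)$. Property (i) of that proposition reads $\|f_n\|_{h^p}\le C\|f\|_{h^p}$, which is exactly the assertion (i) here, and its property (ii) provides the auxiliary convergence $\|f-f_n\|_{\Lambda^{\gamma_p}}\to 0$ (replaced by $\|f-f_n\|_{bmo}\to 0$ in the endpoint $p=1$). Tracking the construction inside the proof of Proposition \ref{czespecial} one also extracts, for each $n$, a \emph{finite} atomic representation $f_n=\sum_{i,k}\lambda_{i,k}a_{i,k}$ by smooth $H^p$-atoms $a_{i,k}$ with $\sum_{i,k}|\lambda_{i,k}|^p\le C\|f\|_{h^p}^p$; this explicit atomic structure is what drives the argument for (iii).

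\textbf{Verification of (ii).} The plan is to produce the pointwise bound $T'g(x)\le C\|g\|_{\Lambda^{\gamma_p}}$ for every $g\in\Lambda^{\gamma_p}$, and then apply it to $g=f-f_n$. Setting $h:=J_m g$, I note that $D^m J_m$ belongs to $OpS^0_{1,0}(\mathbb{R}^N)$ and hence acts boundedly on $\Lambda^{\gamma_p}(\mathbb{R}^N)$, so $\|D^m h\|_{L^\infty}\le C\|g\|_{\Lambda^{\gamma_p}}$. Since $P_{x,t,\cdot}$ reproduces $\mathcal P_{m-1}$ (Lemma \ref{Lema4_M}(iii)), subtracting the Taylor polynomial of $h$ at $x$ of order $m-1$ and using the kernel bound of Lemma \ref{Lema4_M}(iv) gives
\[
|h(y)-P_{x,t,h}(y)|\;\le\; C\,t^{m}\,\|D^{m}h\|_{L^\infty(B(x,C't))}, \qquad y\in B(x,t).
\]
Averaging and taking the supremum over $0<t<1$ yields the desired pointwise estimate, whence $\|T'(f-f_n)\|_{L^p(K)}\le C|K|^{1/p}\|f-f_n\|_{\Lambda^{\gamma_p}}\to 0$. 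For the endpoint $p=1$, the same scheme works with $bmo$ replacing $\Lambda^{\gamma_p}$, using that $OpS^0_{1,0}$ acts boundedly on $bmo(\mathbb{R}^N)$ together with the Poincar\'e-type inequality \eqref{Desigualdade_Poincarè_M_maior}.

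\textbf{Verification of (iii).} Linearity of $J_m$ and of $P_{x,t,\cdot}$ make $T'$ sublinear and positively homogeneous. Combined with the $p$-subadditivity of the $L^p$ quasi-norm ($0<p\le 1$) and the finite atomic decomposition of $f_n$,
\[
\|T'f_n\|_{L^p(\mathbb{R}^N)}^{p}\;\le\;\sum_{i,k}|\lambda_{i,k}|^p\,\|T'a_{i,k}\|_{L^p(\mathbb{R}^N)}^{p},
\]
so it suffices to establish a uniform global estimate $\|T'a\|_{L^p(\mathbb{R}^N)}\le C$ for every $H^p$-atom $a$. For rough atoms (radius $r\ge 1$) this follows from the $L^\infty$ normalization of $a$, the bounded support of $a$, and the exponential decay of the Bessel kernel $G_m$ of $J_m$. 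For standard atoms ($r<1$), the vanishing moments up to order $N_p$ transfer, via convolution with $G_m$ and its derivatives, into decay of $J_m a$ and $D^m J_m a$ far from the support of $a$; the Taylor-subtraction structure inside $T$ then converts this into an integrable tail for $T'a$.

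\textbf{Main obstacle.} The delicate ingredient is precisely the uniform global atomic estimate underlying (iii): one must quantify how the cancellation from the atomic moments interacts simultaneously with the Bessel kernel of $J_m$ and with the polynomial subtraction built into $T$, in order to get an $L^p(\mathbb{R}^N)$ tail with a constant independent of the atom. By contrast, (ii) is comparatively soft and flows cleanly from the $OpS^0_{1,0}$-calculus acting on H\"older (resp.\ $bmo$) classes. Once the uniform atomic bound is in place, properties (i)–(iii) assemble in the spirit of \cite[Appendix A1]{HHP}.
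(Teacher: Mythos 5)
Your proposal is correct and follows essentially the same route as the paper: Proposition \ref{czespecial} yields (i) together with $\|f-f_n\|_{\Lambda^{\gamma_p}}\to 0$; (ii) is reduced to the pointwise bound $T'g(x)\le C\|g\|_{\Lambda^{\gamma_p}}$ (the paper gets this by applying the Poincar\'e inequality \eqref{Desigualdade_Poincarè_M_maior} to $J_mg$ and the boundedness of $D^m\circ J_m\in OpS^0_{1,0}$ on H\"older/$\bmo$ classes, while you rederive it directly from Lemma \ref{Lema4_M}(iii)--(iv) and Taylor's theorem, which is the same machinery); and (iii) is reduced, exactly as in the paper, to the uniform atomic estimate \eqref{atm} via the finite smooth atomic decomposition of $f_n$ and the $p$-subadditivity of $\|\cdot\|_{L^p}^p$.
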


The estimate \eqref{5.3} follows directly from (iii) above. 

\begin{proof} By Proposition \ref{czespecial}, given $f\in C_{c}^{\infty}(\erre^N)$ there exists a sequence of functions $f_n\in \left(C^{m}_c \cap h^{p}\right)(\erre^N)$ such that (i) holds and 
$\displaystyle\lim_{n\to\infty}\|f-f_n\|_{\Lambda^{\gamma_{p}}}=0$. We also claim that 
\begin{equation}\label{3}
\displaystyle{\lim_{n\to\infty}T'(f_n-f)(x)=0}
\end{equation}
uniformly on compact subsets $K \subset \erre^N$,
which directly implies (ii).
To prove \eqref{3}, it suffices to show that the sublinear operator
\begin{align*}
T'f(x) = \sup_{0<t<1} \left( \fint_{B_x^t} 
\left\{ \frac{1}{t^m} \left| 
J_mf(y) - P_{x,t,J_mf}(y) \right|\right\}^\alpha dy \right)^{1/\alpha}
\end{align*}
satisfies $\displaystyle{\sup_K T'f(x)\le C \|f\|_{\Lambda^{\gamma_{p}}}}$.
Applying the inequality \eqref{Desigualdade_Poincarè_M_maior} for $J_{m}f$ and using  the boundedness of the order-zero 
pseudodifferential operators $D^m \circ J_m$ on H\"older spaces (see \cite{Beals}), we obtain
\begin{align*}
	T'f(x)	\le \|D^m \circ J_mf\|_{L^\infty}\le \|D^m \circ J_mf\|_{\Lambda^{\gamma_{p}}} \le C \|f\|_{\Lambda^{\gamma_{p}}}
\end{align*}
for all $x \in K \subset \subset \R^N$. 
For a moment, we assume the validity of  \eqref{atm} in order to  prove (iii). 
Since $f_n \in C^{m}_c(\erre^N)$, we can decompose $f_n$ as a   
\textit{finite} sum of smooth bounded $h^p$-atoms, i.e. $f_n=\sum_{k=1}^{k_n}\lambda_{k,n} a_{k,n}$ satisfying $\sum_k |\lambda_{k,n}|^p\le C\|f\|_{h^p}$ 
with $C>0$ independent of $k$ and $n$ (see the discussion in Section \ref{S4}). Hence,
\begin{align*}
\|T'f_n\|_{L^p}^p\le \sum_{k=1}^{k_n}|\lambda_{k,n}|^p \|T'a_{k,n}\|_{L^p}^p
  \le C  \|f\|^p_{h^p}.
\end{align*}
\qed

\end{proof}

\subsection{Proof of the uniform control \eqref{atm} }
 
Let $a$ be an $h^{p}$-atom compactly supported in a ball $B=B(x_{0},r)$ 
as defined in Subsection \ref{atoms}. Without loss of generality, we may assume $x_{0}=0$. 

Consider first $|x|\leq 10r$ and set $b \,:= \, J_{m} a$. 
Inequality  \eqref{Desigualdade_Poincarè_M_maior} gives 
\begin{equation}
	\left( \fint_{B_x^t} \left|\frac{1}{t^m} \left[b(y) -  P_{x,t,b}(y)\right]\right|^{\alpha} dy \right) ^{{1}/{\alpha}} \lesssim \left( \fint_{B_x^t} | D^m b(y)|^{\alpha} dy \right) ^{{1}/{\alpha}}.
\end{equation}
Taking the supremum over $0<t<1$ yields 
$$T'a(x) \lesssim \left[M(|D^m b|^{\alpha})(x)\right] ^{{1}/{\alpha}}.$$
Hence, if $\alpha< \beta < \infty$ then
\begin{align*}
\|M(|D^mb|^{\alpha})^{1/\alpha}\|_{L^{\beta}}=\|M(|D^mJ_{m}a|^{\alpha})^{1/\alpha}\|_{L^{\beta}}  \lesssim \||D^m J_{m}a|^{\alpha}\|^{1/\alpha}_{L^{\beta/\alpha}}= \|D^m J_{m}a\|_{L^{\beta}}&\lesssim \|a\|_{L^{\beta}} \\
 &\lesssim r^{N\left(\frac{1}{\beta}-\frac{1}{p}\right)},  
\end{align*}
where the second inequality follows from the fact that $D^m \circ J_{m}$ is a pseudo-differential operator of order zero and hence bounded on $L^{\beta}$ for any
$1<\beta<\infty$. Thus,
\begin{align*}
\displaystyle{\left(\int_{|x|\leq 10r}|T'a(x)|^{p}dx\right)^{1/p}} &\displaystyle{ \lesssim r^{N\left(\frac{1}{p}-\frac{1}{\beta}\right)}\left(\int_{|x|\leq 10r}|T'a(x)|^{\beta}dx\right)^{1/\beta}} \\
& \displaystyle{ \lesssim r^{N\left(\frac{1}{p}-\frac{1}{\beta}\right)} \|M(|D^mb|^{\alpha})^{1/\alpha}\|_{L^{\beta}} \lesssim 1},
\end{align*}
uniformly for any $h^{p}-$atom. We point out that only the size condition of the atom is required in the previous control. 
 
Now consider the region $|x|\ge 10 r$. We distinguish atoms according 
to the size of their support $B(0,r)$ as in Subsection \ref{atoms}: atoms with $r\leq 1$ are $H^{p}-$atoms 
and satisfy moment conditions, while atoms with $r>1$ require no moment condition. 

Recall that the tempered distribution kernel $K_{m}$ of Bessel potential $J_{m}$, seen as a pseudodifferential operator, is of convolution type, belongs to $L^{1}(\R^{N})$, smooth off the diagonal, and satisfies the estimate
(see e.g. \cite{AH, St2})
\begin{equation}\label{k1}
\sup_{|\alpha|+|\beta|=M}{\left|\partial_{y}^{\alpha}\partial_{z}^{\beta}K_m(y-z)\right|}\leq C_{\alpha, \beta}|y-z|^{-(M+N-m)},\;\;\;y \neq z
\end{equation}
for {$N-m+M>0$} and  
 \begin{equation}\label{k2}
\sup_{|y-z|\geq 1/2}{|y-z|^{L}\left|\partial_{y}^{\alpha}\partial_{z}^{\beta}K_m(y-z)\right|}\leq C_{\alpha, \beta,L},\;\;\; \forall \;\alpha, \beta \in \N^{n},\,
\end{equation}
for any $L \in \N$. 
If an $h^{p}$-atom $a(z)$ is supported in $B(0,r)$
with $r \geq 1$, a consequence from \eqref{k2} is  
that for every positive integer $L$, there exists a constant $C_{N,m,L}>0$ such that
\begin{equation}\label{nucleo}
\sup_{w\in B_{x}^{t}}|D^m b(w)|\leq C_{N,m,L}\; r^{-\frac{N}{p}+N}|x|^{-L},\;\;\; \forall\; 0<t<1\; \text{and}\; |x|> 10r.
\end{equation}
Indeed, for $y \in B(x,t)$ and $0<t<1$, we have
$|y-x|<t<1\leq r <\frac{1}{10}|x|$  which implies $|y|>|x|-|x-y|>\frac{9}{10}|x|.$ 
Then, for $|w|<r$ follows $|y-w|\geq |y|-|w|>\frac{8}{10}|x|$. Hence, for $y \in B(x,t)$ we have
\begin{align*}
\left|D^{m}b(y)\right|=\left| \int_{B(0,r)}D^{m}{K_{m}(y-w)}a(w)dw\right| &\leq C_{0,m,L}|B(0,r)| \|a\|_{L^{\infty}}|x|^{-L} \\
&\leq C_{N,m,L}r^{-\frac{N}{p}+N}|x|^{-L},
\end{align*}
for all $y \in B_{x}^t$. Therefore, using inequality \eqref{Desigualdade_Poincarè_M_maior} again we obtain
\begin{align*}
	T'a(x) &= \left[ \sup_{0<t<1}  \left\{ \fint_{B(x,t)} \left| \frac{1}{t^m}\left[b(y) - P_{x,t,b}(y)\right]\right|^{\alpha} dy \right\} \right]^{{1}/{\alpha}}\\
	&\lesssim \left[ \sup_{0<t<1}  \left\{ \fint_{B(x,t)} \left|D^{m}b(y)\right|^{\alpha} dy \right\} \right]^{{1}/{\alpha}} \lesssim \dfrac{r^{-\frac{N}{p}+N}}{|x|^L}.
\end{align*}
This implies 
\begin{eqnarray*}
	\int_{|x|\geq10r} |T'a(x)|^p dx= r^N \int_{|x|\geq10} |T'a(rx)|^p dx
	& \lesssim &  r^{(N-L)p}  \int_{|x|\geq10} \frac{1}{|x|^{Lp}} dx,
\end{eqnarray*}
and the estimate is uniformly bounded for $r \geq 1$, by choosing $L>N/p$.

Now assume that $a(z)$ is an $h^{p}-$atom supported in $B(0,r)$ with $r<1$ that satisfies the moment condition $\int_{\R^{N}}a(z)z^\beta dz=0$, for all $|\beta| \leq m-1$. {Note that we may always assume additional moment conditions, namely  $|\beta| \leq N_{p}:= \lfloor N\left(\frac{1}{p}-1\right) \rfloor$, since $N_p \leq \gamma_{p}<  m-1$}. 
Fix $w_0$ and let $P_{w_o}$ denote the Taylor polynomial of degree $m-1$ of $K_m$ centered at $w_0$. Then
$$\left| K_m(w) - P_{w_0}(w) \right| \lesssim |w-w_0|^{m}  
 \sup_{|\gamma|=m} \left|D^{\gamma}K_m(\widetilde{w_0})\right|,$$
for some $\widetilde{\omega_{0}}\in [w,w_{0}]$.
For each $y \in B(0,10r)^c$, consider $w_0:=y$ and $w:=y-z$ with $z \in B(0,r)$ then by 
\eqref{k1} taking $M=m$ and reproducing \eqref{nucleo}, we obtain
\begin{align}\label{5.6}
	 \displaystyle{|b(y)|} &= \left| \int_{B(0,r)}a(z)\left[K_m(y-z)-P_{y}(y-z)\right]dz \right| \notag\\
		&\lesssim \displaystyle{ \int_{B(0,r)}|a(z)|z|^m \sup_{w \in [y-z, y]} | D^m_y K_m(w)| dz}
 \lesssim \displaystyle{\frac{r^{-\frac{N}{p}+N+m}}{|y|^{N}}},
	\end{align} 
for all $|y|>10r$. Similarly, for each $|\beta|=j \in \N$ 
\begin{equation}\label{5.7}
|D^\beta b(y)|  \leq C \; \frac{r^{-\frac{N}{p}+N+m}}{|y|^{N+j}}, \quad |y|\geq 10r. 
\end{equation} 
{Next, we claim the global estimate
	\begin{equation}\label{eq_geral_global}
		|b(y)|:=|J_ma(y)|\lesssim r^{-\frac{N}{p}+m},\quad \text{ for all } y\in \R^{N}.
	\end{equation}
First, assume $1\leq m<N$. Then, the control \eqref{k1} with $M=0$ implies that 
$|J_ma(y)|\lesssim I_{m}|a|(y)$, for all $y \in \R^{N}$, 
where  $I_{\gamma}f:=f\ast|y|^{-(N-\gamma)}$ is the Riesz Potential (up to a constant factor). 
By a well-known estimate (see \cite[pp. 354]{St}), if 
	$1<s<N$ and $s^*=Ns/(N-m s)$ then 
	\[
|J_ma(y)|\lesssim I_{m}|a|(y) \leq C [M|a|(y)]^{s/s^{*}}\|a\|^{1-s/s^{*}}_{L^{s}} \leq C\norma{a}_{L^{\infty}} \; r^m \leq Cr^{-\frac{N}{p}+m}, \quad \text{ for all } y\in \R^{N}.
	\] 
	Now suppose $N \leq m$. Applying Sobolev-Gagliardo-Nirenberg inequality in $L^1(B(0,r))$ for $j:=m-N+1$ times, and use control \eqref{k1} with $M=j$, we obtain  
		\begin{eqnarray*}
			|b(y)| \leq \int_{B(0,r)} |K_m(y-z) a(z)| dz &\lesssim& r^{-\frac{N}{p}} \int_{B(0,r)} |K_m(y-z )| dz\\
			& \lesssim & r^{j-\frac{N}{p}} \int_{B(0,r)} {|D^j K_m(y-z)|} dz \\
			& \lesssim & r^{j-\frac{N}{p}} \int_{B(0,r)} \frac{1}{|y-z|} dz\\
			& = & r^{j-\frac{N}{p}} I_{N-1}(\X_{B(0,r)})(y).
		\end{eqnarray*}
By the same reasoning as before, we can estimate $|I_{N-1}(\X_{B(0,r)})(y)| \leq C \| \X_{B(0,r)} \|_{L^\infty} \; r^{N-1}$ and therefore 
		$$|b(y)| \leq C r^{j-\frac{N}{p} +N -1} = C r^{-\frac{N}{p} +m}, \quad \text{ for all } y\in \R^{N}.$$}
Combining estimate \eqref{eq_geral_global} with \eqref{5.6} yields the global estimate
	\begin{equation}\label{5.8}
		|b(y)|=|J_ma(y)|\leq C \frac{r^{-\frac{N}{p}+N+m}}{(r+|y|)^{N}},\quad y\in \R^{N},
	\end{equation}
and consequently 
\begin{equation}\label{5.9}
	|b(ry)|\leq C \frac{r^{-\frac{N}{p}+m}}{(1+|y|)^{N}},\quad y\in \R^{N}.  
\end{equation}

Consider now $|x|\geq 10$. Then,
\begin{eqnarray*}
	T'(a)(rx) &=& \sup_{0<t<1} \left\{  \left[ \fint_{B(rx,t)} \left| \frac{1}{t^m}[b(\tilde{y}) - P_{rx,t,b}(\tilde{y})]\right|^{\alpha} d\tilde{y} \right] ^{{1}/{\alpha}} \right\}\\
	&=& \sup_{0<t<1} \left\{  \left[ \fint_{B\left(x,\frac{t}{r}\right)} \left| \frac{1}{t^m}[b(ry) - P_{rx,t,b}(ry)]\right|^{\alpha} dy \right] ^{{1}/{\alpha}} \right\}\\
	&\leq& \underbrace{\sup_{0<t<\frac{r|x|}{2}} \left\{  \left[ \fint_{B\left(x,\frac{t}{r}\right)} \left| \frac{1}{t^m}[b(ry) - P_{rx,t,b}(ry)]\right|^{\alpha} dy \right] ^{{1}/{\alpha}} \right\}}_{(I)}\\
	&+& \underbrace{\sup_{\frac{r|x|}{2}<t<1} \left\{  \left[ \fint_{B\left(x,\frac{t}{r}\right)} \left| \frac{1}{t^m}[b(ry) - P_{rx,t,b}(ry)]\right|^{\alpha} dy \right] ^{{1}/{\alpha}} \right\}}_{(II)}.
\end{eqnarray*}
 Analogously to the previous arguments, we get
 \begin{equation} \label{eq_A}
 	(I) \lesssim \dfrac{r^{-\frac{N}{p} }}{|x|^{N+m}}
 \end{equation}
and
\begin{eqnarray*}
	(II) &=& r^{-m} \; \sup_{\frac{|x|}{2}<t<\frac{1}{r}} \left\{  \left[ \fint_{B\left(x,t\right)} \left| \frac{1}{t^m}[b(ry) - P_{rx,rt,b}(ry)]\right|^{\alpha} dy \right] ^{{1}/{\alpha}} \right\}\\
	&\lesssim & r^{-m} \; \sup_{\frac{|x|}{2}<t<\frac{1}{r}} \left\{ \dfrac{1}{t^{m + \frac{N}{\alpha}}} \left( \norma{b(r\cdot) }_{L^{\alpha} (B(x,t))} + \norma{ P_{rx,rt,b}(r\cdot)}_{L^{\alpha} (B(x,t))} \right) \right\}.
\end{eqnarray*}	
Since $\displaystyle{|P_{z, s,f}(y)| \leq C\fint_{Q(z,2s)} |f(z)| dz}$ for all $y \in B(z,s)$,  inequality \eqref{5.9} gives 
\begin{equation}\label{ba}
\norma{b(r\cdot) }_{L^{\alpha} (B(x,t))} \lesssim r^{m -\frac{N}{p}}
\end{equation}
and
$$\norma{ P_{rx,rt,b}(r\cdot)}_{L^{\alpha} (B(x,t))} \lesssim \;\dfrac{r^{m - \frac{N}{p}}}{t^{N(1-\frac{1}{\alpha})}} \; [1+\log t].$$		
Hence,
\begin{equation}\label{eq_B}
	(II)	\lesssim  r^{-\frac{N}{p}}  \left[ \dfrac{1}{|x|^{m + \frac{N}{\alpha}}} + \dfrac{\log |x|+1}{|x|^{m+N}} \right].
\end{equation}
Therefore, combining inequalities \eqref{eq_A} and \eqref{eq_B}, we obtain 
\begin{eqnarray*}
	\int_{|x|\geq 10r} |T'(a)(x)|^p dx &=& r^N \int_{|x|\geq 10} |T'(a)(rx)|^p dx\\
	&\lesssim& \int_{|x| \geq 10} \left( \dfrac{1+\log |x|}{|x|^{N+m}} + \dfrac{1}{|x|^{m + \frac{N}{\alpha}}} \right)^p dx< \infty,
\end{eqnarray*}
{since $\alpha <p^*$}. Finally, for $\alpha = 1$ we may choose  $\gamma>1$ and then by inequality \eqref{Desigualdade_Poincarè_M_maior} we have
$$T'(a)(x) \lesssim \left[M(|D^m b|^{\gamma})(x)\right] ^{{1}/{\gamma}},$$
and the proof proceeds \textit{mutatis mutandis}, except that \eqref{ba} is replaced by
$$\norma{(b(r\cdot) }_{L^{1} (B(x,t))} \lesssim r^{m -\frac{N}{p}} \;\log t$$
and 
$$\norma{ P_{rx,rt,b}(r\cdot)}_{L^{1} (B(x,t))} \lesssim r^{m - \frac{N}{p}} \; [1+\log t].$$

\section{Non-homogeneous estimates and applications}\label{S7}

We start by presenting a non-homogeneous version of Theorem A in which the assumption $A^*(\cdot, D)v = 0$ (in the sense of distributions) is removed.  

\begin{alphatheo}\label{teo_principalb}
	Let $A(\cdot,D)$ be an elliptic {homogeneous} linear differential operator {with order m} as before on $\Omega \subset \R^N$, and let $r, p,q$ satisfy
	$$\dfrac{N}{N+m} < p \leq {\frac{N}{m}}, \; \; \; 1 < q \leq \infty \; \; \text{and} \;\; \dfrac{1}{r} \; := \; \dfrac{1}{p} + \dfrac{1}{q} < 1 + \dfrac{m}{N}.$$
	Then for each $x_0 \in \Omega$ there exist an open neighborhood $x_0 \in U \subset \Omega$ and a constant $C>0$ such that
	\begin{equation*}
		\norma{A(\cdot,D)\phi \cdot v}_{h^r} \leq C \norma{A(\cdot,D)\phi}_{h^p}  \left( \|v\|_{W^{m-1,q}} +\|A^*(\cdot, D)v\|_{L^q} \right), 
	\end{equation*}
	for any $\phi \in C^{\infty}_c(U,E)$ and $v \in C^{\infty}_c(U, F)$. 	
\end{alphatheo}

\begin{proof}
Let $\psi\in C_c^\infty(B(0,1))$ with $\int\psi=1$ and $\psi_t(x)\;:=\;t^{-N}\psi(x/t)$, $0<t<1$. Following the notation of Theorem A, we may write
\begin{equation}\nonumber
\displaystyle{{\psi_t * \left[ A(\cdot,D)\phi \cdot v \right](x)}} = \int_{B(x,t)} {A^*(y,D)\left[\psi_{t}\left(x-y \right) \overline{v(y)} \right] }(\phi - P)(y) dy,
\end{equation}
where 
$P$ is a polynomial of degree at most $m-1$. 
By the uniform local boundedness of the functions $\partial^\theta a_\alpha^*$ and $\partial^\theta \psi$ for $0\leq|\theta| \leq m$, and taking $0<t\leq 1$ we have
\begin{equation} \nonumber
\left|\psi_t \ast  A(\cdot,D) \phi \cdot v\right|(x)\lesssim \frac{C(\psi)}{t^{N+m}} \int_{B_{x}^{t}} \left( \sum_{0 \leq |\gamma| < m} |\partial^{\gamma}v (y)| +|A^{\ast}(y,D)v(y)|\right) |\phi(y) - P(y)| \,dy,
\end{equation}
and the conclusion follows \textit{mutatis mutandis} as before. 	\qed

\end{proof}

\subsection{Elliptic system of vector fields }\label{aplica}

Consider $n$ complex vector fields $L_{1},\dots,L_{n}$, $n \geq 2$, with smooth coefficients defined on $\Omega \subseteq \R^{N}$, where $N \geq 2$. We will assume that  the system of vector fields $\L:=\{L_1,\dots,L_n\}$ is linearly independent. Consider
the gradient 
$\nabla_{\L}:C^{\infty}(\Omega)\mapsto C^{\infty}(\Omega,\mathbb{C}^n)$ given by  
$$\nabla_{\L}\,u\,:= \;(L_{1}u,..,L_{n}u), \quad \quad u \in C^{\infty}(\Omega)$$ 
and its formal {complex}  adjoint operator, defined for $v \in C^{\infty}(\Omega,\mathbb{C}^{n})$ by
\begin{equation}\nonumber
{\rm div}_{\L^{*}}\,v:=L^{*}_{1}v_{1}+...+L^{*}_{n}v_{n}.
\end{equation}
 Here $L_{j}^{*}=\overline{ L_j^{t}}$ for $j=1,\dots,n$, where $\bar L_{j}$ denotes the vector field obtained from $L_j$ by conjugating its coefficients, and $L_j^t$ is the formal transpose of $L_j$. We say the system $\left\{L_{1},\dots,L_{n}\right\}$ is elliptic if for any {\it real} 1-form $\omega$ satisfying  $\left\langle \omega, L_{j}\right\rangle=0$ for all $j=1,\dots,n$, we have $\omega=0$.  Consequently, the number $n$ of vector fields must satisfy
$
\frac{N}{2}\le n\le N.
$
Alternatively, the ellipticity of the system is equivalent to saying that the second order operator
\begin{equation}\nonumber
\Delta_{{\L}}\; :=\;L_1^* L_1+\cdots+L_n^* L_n
\end{equation}
is elliptic. As a direct consequence of the previous non-homogeneous version, we extend \cite[Theorem A]{HHP}.
 
 \begin{corollary}
Assume that the system of complex vector fields $\left\{L_{1},\dots,L_{n}\right\}$ is elliptic on $\Omega\subset\erre^N$, and let $p,q$ satisfy
\[
\frac{N}{N+1}<p<\infty,\quad 1<q\leq \infty,\quad \frac{1}{r} := \frac{1}{p}+\frac{1}{q}<1+\frac{1}{N}.
\]
Then for every point $x_{0} \in \Omega$, there exist an open neighborhood $x_0\in U \subset \Omega$ and a constant $C>0$
such that 
\begin{equation}\label{eq04}
\|\nabla_{\L}\phi\cdot v\|_{h^r} \le C\|\nabla_{\L}\phi\|_{h^p} \left( \|v\|_{h^{q}}+\|{\rm div}_{\L^{*}}v\|_{h^{q}} \right),
\end{equation}
holds for any $\phi\in C_{c}^{\infty}(U)$ and $v\in C^{\infty}_{c}(U,{\C^{n}})$. 
\end{corollary}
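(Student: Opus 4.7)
The plan is to obtain \eqref{eq04} as an immediate specialization of Theorem B. Set $A(\cdot,D):=\nabla_\L$: this is a linear differential operator of order $m=1$ with smooth complex coefficients from $C^\infty(\Omega)$ to $C^\infty(\Omega,\C^n)$, whose formal adjoint is precisely ${\rm div}_{\L^*}$, and the ellipticity of the system $\L$ is equivalent to injectivity of its principal symbol $(x,\xi)\mapsto(L_1(x,\xi),\dots,L_n(x,\xi))$. Hence the hypotheses of Theorem B are met, for $m=1$, in the range $N/(N+1)<p\le N$, $1<q\le\infty$, $1/r<1+1/N$, and Theorem B supplies a neighborhood $U$ of $x_0$ and a constant $C>0$ for which
\[
\|\nabla_\L\phi\cdot v\|_{h^r}\le C\,\|\nabla_\L\phi\|_{h^p}\bigl(\|v\|_{W^{0,q}}+\|{\rm div}_{\L^*}v\|_{L^q}\bigr),\qquad \phi\in C^\infty_c(U),\ v\in C^\infty_c(U,\C^n).
\]

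Since $W^{m-1,q}=W^{0,q}=L^q$ when $m=1$, the Sobolev factor collapses to $\|v\|_{L^q}$, and the assumption $q>1$ lets us invoke the norm equivalence $L^q(\R^N)=h^q(\R^N)$ (recalled in Section \ref{S2}) applied to both $v$ and ${\rm div}_{\L^*}v$, converting the right-hand side into $C\,\|\nabla_\L\phi\|_{h^p}(\|v\|_{h^q}+\|{\rm div}_{\L^*}v\|_{h^q})$. This is exactly \eqref{eq04} in the range $N/(N+1)<p\le N$ delivered by Theorem B.

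The only step that is not immediate from Theorem B is the high-$p$ regime $N<p<\infty$, which I expect to be the main obstacle. When the resulting exponent $r$ exceeds $1$ we are in the classical regime: $h^p=L^p$, $h^q=L^q$, and $h^r=L^r$, so that H\"older's inequality $\|\nabla_\L\phi\cdot v\|_{L^r}\le\|\nabla_\L\phi\|_{L^p}\|v\|_{L^q}$ already produces an even stronger estimate without the divergence term. The one delicate sub-case is $p>N$ with $r\le 1$ (which forces $1<q<p/(p-1)<2$); here I would rerun the proof of Theorem B from Section \ref{S6}, invoking the ``moreover'' clause of Lemma \ref{lemanovo}, which is valid for every $1<p<\infty$ whenever $m<N$ and thus available for $m=1$. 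Since Proposition \ref{prop.gradmod} and Remark \ref{obs1} are stated under $0<p<N$, their use in this proof scheme needs only a mild adaptation at the threshold $p=N$; once that bookkeeping is verified, the whole argument of Theorem A/B transplants unchanged and produces \eqref{eq04} on the full range $N/(N+1)<p<\infty$.
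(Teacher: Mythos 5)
Your proof takes the same route as the paper: specialize Theorem B with $A(\cdot,D)=\nabla_{\L}$ and $m=1$, observe that $W^{0,q}=L^q$ collapses to $h^q$ for $q>1$, and read off \eqref{eq04}. For $N/(N+1)<p\le N$ this is exactly what the paper does, and it is correct. You have also correctly spotted that the paper's one-line ``direct consequence'' claim does not literally cover the full range $N/(N+1)<p<\infty$ asserted in the corollary, since Theorem B is stated only for $p\le N/m=N$. Your dichotomy for the missing range $p>N$ is sound: if $r>1$ then all three exponents are $>1$, Hardy spaces coincide with Lebesgue spaces, and H\"older already gives a stronger bound with no curl term; the genuinely new case is $p>N$, $r\le1$, which forces $1<q<N/(N-1)$.

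Where I would push back is the assertion that closing the $p>N$, $r\le1$ case needs ``only a mild adaptation at the threshold $p=N$'' of Proposition \ref{prop.gradmod} and Remark \ref{obs1}. The issue is not a boundary effect at $p=N$: the bootstrap in Proposition \ref{prop.gradmod} is built on the Sobolev--Gagliardo--Nirenberg estimate $\|u\|_{h^{p^*}}\lesssim\|\nabla u\|_{h^p}$ with $p^*=Np/(N-p)$, which is simply undefined for $p\ge N$, so that mechanism is unavailable, not merely in need of adjustment. For $p>1$ one must instead invoke the classical $L^p$ elliptic estimate for compactly supported functions (with the lower-order term absorbed via a Friedrichs--Poincar\'e inequality $\|u\|_{L^p(B)}\lesssim r_B^m\|D^mu\|_{L^p}$, which is valid for all $p$), and then the ``moreover'' clause of Lemma \ref{lemanovo} together with the choice $\beta'<p$ already present in the proof of Theorem A carries the rest. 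So your overall plan is right, but the substitute for Proposition \ref{prop.gradmod} is a different argument, not a small modification of the one in the paper; it is worth stating this explicitly, especially since the paper itself leaves the $p>N$ range unaddressed.
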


\subsection{Elliptic complexes of vector fields }\label{aplica2}

Let  $C^{\infty}(\Omega,\Lambda^{k}\R^{n})$ denote the space of $k$-forms on $\R^{n}$, $0\leq k \leq n$, with smooth complex coefficients defined on $\Omega$. Each $f \in C^{\infty}(\Omega,\Lambda^{k}\R^{n})$ may be written as
$
\displaystyle{f=\sum_{|I|=k}f_{I}dx_{I}}$, with $dx_{I}=dx_{i_1}\wedge\cdots\wedge dx_{i_k},
$
where one has $f_{I}\in C^{\infty}(\Omega)$ and $I=\left\{i_{1},...,i_{k}\right\}$ is a set of strictly increasing indices with $i_{l}\in \left\{1,...,n\right\}$,  $l=1,...,k$.  Consider the differential operators
\[
d_{\bsL,k}: C^{\infty}(\Omega,\Lambda^{k}\R^{n})\rightarrow C^{\infty}(\Omega,\Lambda^{k+1}\R^{n})
\]
defined by
$$d_{\bsL,0}f:=\sum_{j=1}^{n}(L_{j}f)dx_{j}$$ for $f \in C^{\infty}(\Omega)$,
and
\begin{equation}\nonumber
d_{\bsL,k}f:=\sum_{|I|=k}(d_{\bsL,0}f_{I})dx_{I}=\sum_{|I|=k}\sum_{j=1}^{n}(L_{j}f_{I})dx_{j} \wedge dx_{I}
\end{equation}  
 for $f=\sum_{|I|=k}f_{I}dx_{I}\in C^{\infty}(\Omega,\Lambda^{k}\R^{n})$ and $1\le k\le n-1$.
We also define the dual pseudo-complex $d^{*}_{\bsL,k}: C^{\infty}(\Omega,\Lambda^{k+1}\R^{n})\rightarrow 
C^{\infty}(\Omega,\Lambda^{k}\R^{n})$, $0 \leq k \leq n-1$, determined {by the following relation for any $u \in C_{c}^{\infty}(\Omega,\Lambda^{k}\R^{n})$ and $v \in C_{c}^{\infty}(\Omega,\Lambda^{k+1}\R^{n})$:}
$$\int d_{\bsL,k}u\cdot \overline{v}\; =\int u \cdot \overline{d^{*}_{\bsL,k}v},$$
where the dot indicates the standard pairing on forms of the same degree. Explicitly, for  
$\displaystyle{f=\sum_{|J|=k}f_{J}dx_{J}}$, we have
$$\displaystyle{d_{\bsL,k}^{*}f=\sum_{|J|=k}\sum_{j \in J}L_{j}^{*}f_{J}dx_{j} \vee dx_{J}},$$
where, for each  $j_{l} \in J=\left\{j_{1},...,j_{k}\right\}$ and  $l \in \left\{1,...,k\right\}$, $dx_{j_{l}} \vee dx_{J}$ is defined by:
$
dx_{j_{l}} \vee dx_{J}:=(-1)^{l+1}dx_{1} \wedge ... \wedge dx_{j_{l-1}}\wedge dx_{j_{l+1}}\wedge ... \wedge dx_{j_{k}}.
$

Now suppose that the system $\bsL$ is involutive, \textit{i.e.} that each commutator $[L_{j},L_{\ell}]$ for $1\leq j,\ell \leq n$ is a linear combination of $L_{1},\dots,L_{n}$. 
Then the chain $\{ d_{\bsL,k} \}_{k}$  defines a complex of differential operators associated to the structure $\bsL$, which is precisely the \textit{de Rham complex} when $n=N$ and $L_{j}=\partial_{x_{j}}$  (see \cite{BCH} for more details). In the non-involutive situation, we do not get a complex in general, and the fundamental complex property $d_{\bsL,k+1}\circ d_{\bsL,k}=0$ might not hold. On the other hand, this chain still satisfies a ``pseudo-complex" property in the sense that
$d_{\bsL,k+1}\circ d_{\bsL,k}$ is a differential of operator of order one,
 rather than order two as generically expected. We refer to $(d_{\bsL,k},C^{\infty}(\Omega,\Lambda^{k}\R^{n}))$ as the pseudo-complex $\{d_{\bsL}\}$ associated with $\bsL$ on $\Omega$. 

Consider the operator $$A(\,\cdot \,,D)=(d_{\bsL,k}, d^{*}_{\bsL,k-1}): C_{c}^{\infty}(\Omega,\Lambda^{k}\R^{n}) \rightarrow C_{c}^{\infty}(\Omega,\Lambda^{k+1}\R^{n}) \times C_{c}^{\infty}(\Omega,\Lambda^{k-1}\R^{n}),$$
for $0\leq k \leq n$. Here the operator $d_{\bsL,-1}=d^*_{\bsL,-1} $ is understood to be zero. From \cite[Lemma 3.1]{HP2} the operator $A(\cdot,D)$ is elliptic. It is clear that 
for $(u,v) \in C_{c}^{\infty}(\Omega,\Lambda^{k+1}\R^{n}) \times C_{c}^{\infty}(\Omega,\Lambda^{k-1}\R^{n})$
we have
\begin{equation}\label{aplic1}
A^{*}(\cdot,D)(u,v)=d^{*}_{\bsL,k} u + d_{\bsL,k-1} v.
\end{equation}

 

\begin{corollary}
Assume that the system of vector fields $\left\{L_{1},\dots,L_{n}\right\}$ is elliptic, let $0 \leq k \leq n$, and $p,q,r$ be as in Theorem  A. Then every point $x_{0} \in \Omega$ is contained in an open neighborhood $U \subset \Omega$ such that the estimate 
\begin{equation*}
\|d_{\L,k}\phi \cdot v_{1}+ d_{\L,k}\phi \cdot v_{2}\|_{h^{r}}\leq C \left(\|d_{\L,k}\phi\|_{h^{p}}+\|d^{*}_{\L,k-1}\phi\|_{h^{p}} \right)\left(\|v\|_{h^{q}}+ \|d^{*}_{\bsL,k} v_{1}\|_{h^{q}} + \|d_{\bsL,k-1} v_{2}\|_{h^{q}}\right).
\end{equation*}
 holds for some $C>0$ and for every $\phi \in C_{c}^{\infty}(U,\Lambda^{k}\R^{n})$, $v_{1} \in C_{c}^{\infty}(U,\Lambda^{k+1}\R^{n})$ and $v_{2} \in C_{c}^{\infty}(U,\Lambda^{k-1}\R^{n})$.
\end{corollary}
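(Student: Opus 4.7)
The plan is to recognize the displayed inequality as a direct specialization of Theorem~B to a carefully chosen first-order elliptic operator, and then unfold the resulting estimate into the stated form. Set
\[
A(\cdot, D) := (d_{\bsL,k}, d^*_{\bsL,k-1}) : C_c^\infty(\Omega, \Lambda^{k}\R^{n}) \longrightarrow C_c^\infty(\Omega, \Lambda^{k+1}\R^{n}) \times C_c^\infty(\Omega, \Lambda^{k-1}\R^{n}).
\]
This is a first-order linear differential operator with smooth complex coefficients, so $m=1$ in the notation of Theorem~B. It is elliptic by \cite[Lemma 3.1]{HP2}, as already quoted in the paper, and its formal adjoint is recorded in \eqref{aplic1}: $A^*(\cdot, D)(v_1, v_2) = d^*_{\bsL,k} v_1 + d_{\bsL,k-1} v_2$. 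With these identifications in place, the pointwise pairing between $A(\cdot, D)\phi$ and $(v_1, v_2)$ in the target vector space $\Lambda^{k+1}\R^{n} \oplus \Lambda^{k-1}\R^{n}$ is precisely $d_{\bsL,k}\phi \cdot v_1 + d^*_{\bsL,k-1}\phi \cdot v_2$, which is the quantity on the left-hand side of the corollary (the second summand $d_{\bsL,k}\phi\cdot v_2$ appearing in the statement being a minor typo, as the form degrees only match for $d^*_{\bsL,k-1}\phi\cdot v_2$).

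Next, I would apply Theorem~B to this $A(\cdot, D)$ at the fixed point $x_0 \in \Omega$, obtaining an open neighborhood $U$ of $x_0$ and a constant $C > 0$ such that
\begin{equation*}
\norma{A(\cdot,D)\phi \cdot (v_1, v_2)}_{h^r} \le C \norma{A(\cdot,D)\phi}_{h^p}\bigl(\norma{(v_1, v_2)}_{W^{m-1, q}} + \norma{A^*(\cdot, D)(v_1, v_2)}_{L^q}\bigr)
\end{equation*}
for all $\phi \in C_c^\infty(U, \Lambda^{k}\R^{n})$ and $(v_1, v_2) \in C_c^\infty(U, \Lambda^{k+1}\R^{n}) \oplus C_c^\infty(U, \Lambda^{k-1}\R^{n})$. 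The range of parameters $(p,q,r)$ postulated in the corollary coincides with the range required by Theorem~B when $m=1$, and $\norma{(v_1,v_2)}_{W^{0,q}}$ is simply $\norma{v_1}_{L^q} + \norma{v_2}_{L^q}$.

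Finally, I would cash out both sides componentwise. On the left we already have $\norma{d_{\bsL,k}\phi \cdot v_1 + d^*_{\bsL,k-1}\phi \cdot v_2}_{h^r}$ by the pairing computation above. On the right, the triangle inequality gives $\norma{A(\cdot,D)\phi}_{h^p} \le \norma{d_{\bsL,k}\phi}_{h^p} + \norma{d^*_{\bsL,k-1}\phi}_{h^p}$ and $\norma{A^*(\cdot,D)(v_1,v_2)}_{L^q} \le \norma{d^*_{\bsL,k}v_1}_{L^q} + \norma{d_{\bsL,k-1}v_2}_{L^q}$. Since $1 < q \le \infty$, the identification $L^q(\R^N) = h^q(\R^N)$ with comparable norms allows replacement of every $L^q$ norm by an $h^q$ norm, producing exactly the stated estimate.

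I do not anticipate a substantive obstacle: the argument is essentially a mechanical reduction to Theorem~B, and the only genuinely nontrivial input is the ellipticity of the combined operator $(d_{\bsL,k}, d^*_{\bsL,k-1})$, which is imported as a black box from \cite[Lemma 3.1]{HP2}. The mildly subtle points are the bookkeeping of form degrees that fixes the typo in the left-hand side and the observation that a first-order operator eliminates the Sobolev derivative in $W^{m-1,q}$.
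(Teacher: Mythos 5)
Your proof is correct and takes essentially the same approach as the paper: the paper sets up the first-order elliptic operator $A(\cdot,D)=(d_{\bsL,k}, d^{*}_{\bsL,k-1})$, quotes its ellipticity from \cite[Lemma 3.1]{HP2}, records the adjoint in \eqref{aplic1}, and then states the corollary as an immediate consequence of Theorem~B. Your identification of the typo in the left-hand side (the second term should read $d^{*}_{\bsL,k-1}\phi\cdot v_2$, by degree matching) and your observation that $W^{0,q}=L^q\simeq h^q$ for $q>1$ are both correct and are exactly the bookkeeping the paper leaves to the reader.
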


In particular taking $v_{2}=0$ and $v=v_{1}$, we obtain 
\begin{equation*}
\|d_{\L,k}\phi \cdot v\|_{h^{r}}\leq C \left(\|d_{\L,k}\phi\|_{h^{p}}+\|d^{*}_{\L,k-1}\phi\|_{h^{p}} \right)\left(\|v\|_{h^{q}}+ \|d^{*}_{\bsL,k} v\|_{h^{q}} \right)
\end{equation*}
for all $\phi \in C_{c}^{\infty}(U,\Lambda^{k}\R^{n})$ and $v \in C_{c}^{\infty}(U,\Lambda^{k+1}\R^{n})$,
which recovers Theorems B and C in \cite{HHP}; in the latter we assume that the system $\L$ is involutive.  
   
 \vglue 1cm  

\noindent \textbf{Acknowledgments:} The authors thank Jorge Hounie, Gabriel Ara\'ujo, and Gustavo Hoepfner for their helpful suggestions and contributions. 

\end{document}